\newtheorem{thm}{Theorem}[section]
\newtheorem{cor}[thm]{Corollary}
\newtheorem{lem}[thm]{Lemma}
\numberwithin{equation}{section}
\begin{document}

\title{The core inverse in a Banach algebra with involution}

\author{Marjan Sheibani}
\author{Huanyin Chen}
\address{Farzanegan Campus, Semnan University, Semnan, Iran}
\email{<m.sheibani@semnan.ac.ir>}
\address{
School of Mathematics\\ Hangzhou Normal University\\ Hang -zhou, China}
\email{<huanyinchenhz@163.com>}

\subjclass[2020]{15A09, 16W10.} \keywords{group inverse; core inverse; additive property; matrix; Banach algebra.}

\begin{abstract}  We present new additive results for the core inverse in a Banach algebra with involution. We obtain necessary and sufficient conditions under which
the sum of two core invertible elements in a Banach algebra with involution is core invertible. Then we apply our results to block complex matrices and
obtain certain conditions under which a block complex is core invertible. These generalize many known results, e.g.,~\cite[Theorem 4.3]{XCZ}, ~\cite[Theorem 2.5]{XS}.
\end{abstract}

\maketitle

\section{Introduction}

An involution of a Banach algebra $\mathcal{A}$ is an
anti-automorphism whose square is the identity map $1$. Thus an
involution of a Banach algebra $\mathcal{A}$ is an operation $* :\mathcal{A}\rightarrow \mathcal{A}$ such
that $(x+y)^* = x^*+y^*$, $(xy)^* = y^*x^*$ and $(x^*)^* = x$ for
all $x, y \in \mathcal{A}$. A Banach algebra $\mathcal{A}$ with involution $*$ is called a
Banach *-algebra, and every complex Banach is a  $C^*$-algebra. Let $\mathcal{A}$ be a Banach *-algebra. We say that $a\in \mathcal{A}$ has core inverse if there exists some $x\in \mathcal{A}$ such that $$xa^2=a, ax^2=x, (ax)^*=ax.$$
If such $x$ exists, it is unique, and denote it by $a^{\tiny\textcircled{\#}}$.

An element $a$ in a Banach algebra $\mathcal{A}$ has group inverse provided that there exists $x\in \mathcal{A}$ such that $$a=axa, x=xax, ax=xa.$$ Such $x$ is unique if exists, denoted by $a^{\#}$, and called the group inverse of $a$. As is well known, a square complex matrix $A$ has group inverse if and only if $rank(A)=rank(A^2)$. Group invertibility was also extensively study under the "strongly regularity" in ring theory.

In ~\cite[Theorem 2.6]{XCZ}, it was proved that $a\in \mathcal{A}^{\tiny\textcircled{\#}}$ if and only if $a\in \mathcal{A}^{\#}$ and $a$ has $(1,3)$-inverse. Here $a\in \mathcal{A}$ has $(1,3)$ inverse if and only if there exists some $x\in \mathcal{A}$ such that $a=axa$ and $(ax)^*=ax$. We list several characterizations of core inverse.

\begin{thm} (see~\cite[Theorem 2.14]{RD}, \cite[Theorem 2.8]{RD} and \cite[Theorem 3.4]{LC}. Let $\mathcal{A}$ be a Banach *-algebra, and let $a\in \mathcal{A}$. Then the following are equivalent:\end{thm}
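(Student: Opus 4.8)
The plan is to prove the equivalences by a single cycle of implications, anchored on two footholds: the defining system $xa^2=a$, $ax^2=x$, $(ax)^*=ax$, and the decomposition of core invertibility into group invertibility together with $(1,3)$-invertibility supplied by~\cite[Theorem 2.6]{XCZ}. Throughout I would keep track of the two idempotents attached to a putative core inverse $x$. The third equation forces $ax$ to be Hermitian, and I would first record the standard consequences of the system: multiplying $ax^2=x$ on the right by $a^2$ and using $xa^2=a$ gives $axa=a$; hence $ax$ is a Hermitian idempotent, and substituting $x=ax^2$ into the middle factor of $xax$ gives $xax=x$. Thus any core inverse is automatically a reflexive $(1,2,3)$-inverse, and $ax$ is the distinguished projection of the theory. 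The guiding principle is that core invertibility says $a$ generates the same right ideal as $a^2$ (the group-theoretic content), while the orthogonality data is pinned down by $a^*$ (the $*$-content).

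The first main link, between the defining system and the group-plus-$(1,3)$ condition, I would take from~\cite[Theorem 2.6]{XCZ} in the forward direction; for the converse, given a group inverse $a^{\#}$ and a $(1,3)$-inverse $a^{(1,3)}$, I set $x=a^{\#}aa^{(1,3)}$ and verify the three equations by routine substitution using $aa^{\#}a=a$, $a^{\#}aa^{\#}=a^{\#}$, $aa^{\#}=a^{\#}a$ together with $aa^{(1,3)}a=a$ and $(aa^{(1,3)})^{*}=aa^{(1,3)}$. The key simplification is the identity $ax=aa^{(1,3)}$, which is manifestly Hermitian, while the idempotent $aa^{\#}$ absorbs into $x$ to yield $xa^2=a$.

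For the ideal/annihilator characterizations I would translate the equations into membership statements. From $xa^2=a$ and $ax^2=x$ one reads off $a\in x\mathcal{A}$ and $x\in a\mathcal{A}$, hence $x\mathcal{A}=a\mathcal{A}$; and from $axa=a$ with $ax\in a\mathcal{A}$ one gets $ax\mathcal{A}=a\mathcal{A}$, so $ax$ is precisely the Hermitian projection generating the right ideal $a\mathcal{A}$. The self-adjointness $(ax)^{*}=ax$ is then repackaged as a left-ideal constraint tying $x$ to $a^{*}$. Each reverse implication is obtained by reconstructing $x$ from the projection $p:=ax$: one checks that $p$ is the unique projection with $p\mathcal{A}=a\mathcal{A}$, and recovers $x$ as $x=a^{\#}p$ using the group inverse of $a$. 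Uniqueness I would dispose of once and for all by the standard argument: if $x,y$ both satisfy the system, then $ax$ and $ay$ are Hermitian idempotents with $ax\mathcal{A}=a\mathcal{A}=ay\mathcal{A}$, and two self-adjoint idempotents with the same range ideal coincide (from $p\mathcal{A}=q\mathcal{A}$ one gets $qp=p$, whence $pq=p$ after taking adjoints, and symmetrically $pq=q$, so $p=q$); then $x=a^{\#}(ax)=a^{\#}(ay)=y$.

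The step I expect to be the main obstacle is the $*$-direction: extracting the Hermitian-idempotent conclusion $(ax)^{*}=ax$ from a purely ideal-theoretic hypothesis (such as a constraint placing $x$ in the left ideal generated by $a^{*}$) in a Banach $*$-algebra, where I cannot appeal to the finite-dimensional range/rank arguments available for matrices. The resolution is that a Hermitian idempotent is determined by the right ideal it generates together with its self-adjointness, so the work lies in verifying that the idempotent $ax$ produced from the ideal data is genuinely self-adjoint rather than merely an oblique projection; this is exactly where the hypothesis referencing $a^{*}$, as opposed to $a$, carries the essential weight, and where I would spend most of the care in the argument.
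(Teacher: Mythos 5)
The paper does not actually prove this theorem: it is imported from the literature (\cite[Theorems 2.8 and 2.14]{RD} and \cite[Theorem 3.4]{LC}) and used as a tool, so there is no internal proof to compare against. Judged on its own terms, the portions of your sketch that are carried out are correct: the derivation of $axa=a$ (from $ax^2a^2=xa^2=a$ and $x^2a^2=xa$), hence the idempotency of $ax$ and $xax=x$; the reduction to ``group invertible plus $(1,3)$-invertible'' with the reconstruction $x=a^{\#}aa^{(1,3)}$ and the key simplification $ax=aa^{(1,3)}$; and the uniqueness argument via two self-adjoint idempotents generating the same right ideal. These are the standard moves and they check out.

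However, two of the five conditions are not actually reached, so the cycle of implications is not closed. Condition (4) --- a projection $p$ with $pa=0$ and $a+p\in\mathcal{A}^{-1}$ --- is never addressed; the needed bridge is $p=1-ax$ in one direction and $x=(a+p)^{-1}(1-p)$ in the other, where one uses $p(a+p)=p$, hence $p(a+p)^{-1}=p$, to verify $ax^2=x$, and $(a+p)a=a^2$ to verify $xa^2=a$. None of this is in your outline. More seriously, for (3) and (5) you correctly identify the hard direction --- extracting $(ax)^{*}=ax$ from ideal-theoretic data involving $a^{*}$ --- but your stated ``resolution'' only restates the problem (``the work lies in verifying that the idempotent $ax$ \ldots is genuinely self-adjoint''). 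The missing content is short but not automatic; for instance, for $(5)\Rightarrow(1)$: from $\mathcal{A}a=\mathcal{A}a^{*}a$ write $a=ta^{*}a$; taking adjoints gives $a^{*}=a^{*}at^{*}$, hence $a^{*}a=a^{*}at^{*}a$ and then $a=ta^{*}a=t(a^{*}at^{*}a)=at^{*}a$, while $at^{*}=(ta^{*}a)t^{*}=t(a^{*}at^{*})=ta^{*}=(at^{*})^{*}$, so $t^{*}$ is a $(1,3)$-inverse of $a$ and the group-plus-$(1,3)$ criterion applies. An analogous computation is needed for the $x^{*}\mathcal{A}$ clause in (3). Until these are supplied, the proposal is an outline of the easy implications rather than a proof of the equivalence.
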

\begin{enumerate}
\item [(1)]{\it $a$ has core inverse.}
\item [(2)]{\it There exists $x\in \mathcal{A}$ such that $axa=a, x=xax, xa^2=a, ax^2=x, (ax)^*=ax$.}
\item [(3)]{\it There exists $x\in \mathcal{A}$ such that $axa=a$ and $aR=xR=x^*R$.}
\item [(4)]{\it There exists a projection $p\in \mathcal{A}$ such that $pa=0$ and $a+p\in \mathcal{A}^{-1}$.}
\item [(5)]{\it $a\in \mathcal{A}^{\#}$ and $\mathcal{A}a=\mathcal{A}a^*a$.}
\end{enumerate}

The core invertibility in a Banach *-algebra is attractive. Many authors have studied such problems from many different views, e.g., ~\cite{BT, RD, K, XS, XCZ, Z2}.
The core inverse of $a+b$ was obtained when $ab=0$ and $a^*b=0$ for two core invertible elements $a$ and $b$ (see~\cite[Theorem 4.3]{XCZ}).
In ~\cite[Theorem 4.1]{ZCX}, Zhou et al. considered the core inverse of $a+b$ under the condition $a^2a^{\tiny\textcircled{\#}}b^{\tiny\textcircled{\#}}b=baa^{\tiny\textcircled{\#}}, ab^{\tiny\textcircled{\#}}b=aa^{\tiny\textcircled{\#}}b$ in a Dedekind-finite ring in which $2$ is invertible. These inspire us to investigate new additive properties for core invertibility in a Banach *-algebra.

Recall that $a\in \mathcal{A}^{EP}$, i.e., $a$ is an EP element, if there exists $x\in \mathcal{A}$ such that
$xa^2=a, ax=xa, (ax)^*=ax.$ Evidently, $a\in R^{EP}$ if and only if $a\in \mathcal{A}^{\#}$ and $aR=a^*R$(see~\cite[Theorem 3.1]{MDK}).
In Section 2, we present necessary and sufficient conditions under which
the sum of an EP element and core invertible element in a Banach *-algebra is core invertible. Explicit conditions are obtained for the core invertibility of the sum of two EP elements. This extend ~\cite[Theorem 4.3]{XCZ} to the general setting.

In Section 3, we give more simpler conditions for the existence of core inverse under certain communicative conditions. Let $a,b\in \mathcal{A}^{\tiny\textcircled{\#}}$. If $ab=ba$ and $a^*b=ba^*$, we prove that $a+b\in \mathcal{A}^{\tiny\textcircled{\#}}$ and $a^{\tiny\textcircled{\#}}(a+b)^{\tiny\textcircled{\#}}a^{\pi}=0$ if and only if $1+a^{\tiny\textcircled{\#}}b\in \mathcal{A}^{\tiny\textcircled{\#}}$.

Let $C^{n\times n}$ be a Banach *-algebra of $n\times n$ complex matrices, with conjugate transpose as the involution. A matrix $A\in C^{n\times n}$ has core inverse $X$ if and only if $AX=P_A$ and $\mathcal{R}(X)\subseteq \mathcal{R}(A)$, where $P_A$ is the projection on $\mathcal{R}(A)$ (see~\cite[Definition 1]{BT}).
Finally, in Section 4, we apply our preceding results to complex matrices and obtain certain conditions under which a block complex matrix has core inverse.

If $a\in \mathcal{A}$ has the group inverse $a^{\#}$, the element $a^{\pi}=1-aa^{\#}$ is called the spectral idempotent of $a$. Let $p\in \mathcal{A}$ be an idempotent, and let $x\in \mathcal{A}$. Then we write $x=pxp+px(1-p)+(1-p)xp+(1-p)x(1-p),$ and induce a Pierce representation given by the matrix
$x=\left(\begin{array}{cc}
pxp&px(1-p)\\
(1-p)xp&(1-p)x(1-p)
\end{array}
\right)_p.$

Throughout the paper, all Banach *-algebras are complex with an identity. $p\in \mathcal{A}$ is a projection provided that $p^2=p=p^*$.
We use $\mathcal{A}^{\#}, \mathcal{A}^{\tiny\textcircled{\#}}$ and $\mathcal{A}^{EP}$ to
denote the set of all group invertible elements, core invertible elements  and EP elements in $\mathcal{A}$, respectively. $A^*$ stands for the conjugate transpose
$\overline{A}^T$ of the complex matrix $A$.

\section{additive properties}

This section is devoted to study the core inverse of the sum of two core invertible elements in a Banach *-algebra. We begin with

\begin{lem} Let $p\in \mathcal{A}$ be an idempotent, $a\in \mathcal{A}^{\#}$ and $pap^{\pi}=0$. If $ap^{\pi}\in \mathcal{A}^{\#}$, then $(ap^{\pi})(ap^{\pi})^{\#}=(aa^{\#})p^{\pi}.$\end{lem}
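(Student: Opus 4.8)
The plan is to work in the Peirce decomposition relative to $p$. Since $p$ is an idempotent, $p^{\#}=p$, so $p^{\pi}=1-p$, and the hypothesis $pap^{\pi}=0$ says exactly that the upper-right Peirce block $pa(1-p)$ vanishes; thus $a=\left(\begin{array}{cc}pap&0\\(1-p)ap&(1-p)a(1-p)\end{array}\right)_p$. A one-line computation, $ap^{\pi}-(1-p)a(1-p)=(a-(1-p)a)(1-p)=pa(1-p)=0$, shows $ap^{\pi}=(1-p)a(1-p)=:a_4$, which lies in the corner algebra $(1-p)\mathcal{A}(1-p)$ with unit $1-p$.

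First I would record the routine corner fact that an element of $(1-p)\mathcal{A}(1-p)$ which is group invertible in $\mathcal{A}$ is already group invertible in the corner, with the same group inverse: writing $v:=(ap^{\pi})^{\#}$, the three defining equations force $v=(1-p)v(1-p)$. Hence $v=a_4^{\#}$ (the group inverse computed in the corner) and $(ap^{\pi})(ap^{\pi})^{\#}=a_4a_4^{\#}=:f$, the group idempotent of $a_4$, which sits in the $(2,2)$-position, i.e. $(ap^{\pi})(ap^{\pi})^{\#}=\left(\begin{array}{cc}0&0\\0&f\end{array}\right)_p$. It therefore suffices to prove that $aa^{\#}p^{\pi}=aa^{\#}(1-p)$ has the same Peirce form. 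Writing $a_1:=pap$, $d_2:=pa^{\#}(1-p)$ and $d_4:=(1-p)a^{\#}(1-p)$, the element $aa^{\#}(1-p)$ has upper-right block $a_1d_2$ and lower-right block $h:=(1-p)aa^{\#}(1-p)$, so the two identities to establish are $a_1d_2=0$ and $h=f$.

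For the first, comparing upper-right blocks in $aa^{\#}=a^{\#}a$ gives $a_1d_2=d_2a_4$, and comparing upper-right blocks in $aa^{\#}a=a$ (using $pa(1-p)=0$) gives $a_1d_2a_4=0$; together these yield $d_2a_4^{2}=0$, hence $d_2f=d_2a_4^{2}(a_4^{\#})^{2}=0$. Since $d_2=d_2(1-p)$ and $((1-p)-f)a_4=0$, we get $d_2a_4=d_2fa_4+d_2((1-p)-f)a_4=0$, and then $a_1d_2=d_2a_4=0$. This kills the upper-right block, so $p\,aa^{\#}(1-p)=0$. With this in hand, the $(2,2)$-blocks of $aa^{\#}a=a$ and $a\,aa^{\#}=a$ give $ha_4=a_4h=a_4$, whence $fh=hf=f$ after multiplying by $a_4^{\#}$; and comparing $(2,2)$-blocks in $aa^{\#}=a^{\#}a$ gives $h=d_4a_4$, so $hf=d_4a_4f=d_4a_4=h$. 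Therefore $h=f$, and $aa^{\#}p^{\pi}=\left(\begin{array}{cc}0&0\\0&f\end{array}\right)_p=(ap^{\pi})(ap^{\pi})^{\#}$.

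The heart of the argument, and the step I expect to be the main obstacle, is exactly this pair of block identities: that $aa^{\#}$ inherits the lower-triangular shape in its upper-right corner and that its lower-right corner is the group idempotent of $a_4$. Both hinge on the hypothesis $ap^{\pi}\in\mathcal{A}^{\#}$, which is what makes $a_4^{\#}$ (and hence $f$) available and drives the computation $d_2f=0$. This hypothesis cannot be dropped: for a bilateral shift on $\ell^2(\mathbb{Z})$, which is invertible and block lower-triangular with respect to a suitable $p$ but for which $ap^{\pi}$ is not group invertible, the group inverse $a^{\#}$ already fails to be block lower-triangular.
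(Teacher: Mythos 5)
Your argument is correct, but it is a much longer road than the one the paper takes. The paper's proof is a two-step direct computation: from $pap^{\pi}=0$ one gets $(ap^{\pi})^{2}=a^{2}p^{\pi}$ and $p^{\pi}a^{2}p^{\pi}=a^{2}p^{\pi}$, hence $ap^{\pi}=(a^{\#}p^{\pi})(ap^{\pi})^{2}$; multiplying on the right by $(ap^{\pi})^{\#}$ and using $(ap^{\pi})^{2}(ap^{\pi})^{\#}=ap^{\pi}$ gives $(ap^{\pi})(ap^{\pi})^{\#}=(a^{\#}p^{\pi})(ap^{\pi})=a^{\#}ap^{\pi}$, and that is the whole proof. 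You instead pass to the full Peirce decomposition, prove the corner fact that the group inverse of $ap^{\pi}$ lives in $p^{\pi}\mathcal{A}p^{\pi}$, and then verify block by block that $aa^{\#}p^{\pi}$ has vanishing $(1,2)$-entry and that its $(2,2)$-entry equals the group idempotent of $a_{4}$; each of those block identities ($a_{1}d_{2}=d_{2}a_{4}$, $a_{1}d_{2}a_{4}=0$, $d_{2}a_{4}^{2}=0$, $h=f$) checks out, so the proof is sound. What your route buys is structural information the paper's computation hides: it shows explicitly that $a^{\#}$ need not inherit the lower-triangular form but $aa^{\#}$ does, and it isolates exactly where the hypothesis $ap^{\pi}\in\mathcal{A}^{\#}$ enters (it supplies $f$ and kills $d_{2}f$); your bilateral-shift remark correctly illustrates that without this hypothesis the upper-right block of $a^{\#}$ can be nonzero. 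What the paper's route buys is brevity and the reusable identity $ap^{\pi}=(a^{\#}p^{\pi})(ap^{\pi})^{2}$, which is the form actually invoked later in the paper. If you keep your version, the only presentational caution is that your Peirce-block bookkeeping is doing silently what the one-line identity $pa^{2}p^{\pi}=(pap^{\pi})(ap^{\pi})=0$ does explicitly; it is worth stating that consequence of $pap^{\pi}=0$ up front, since it is the engine of both proofs.
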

\begin{proof} Since $pap^{\pi}=0$, we see that $$\begin{array}{lll}
ap^{\pi}-(a^{\#}p^{\pi})(ap^{\pi})^2&=&ap^{\pi}-(a^{\#}a^2)p^{\pi}\\
&=&(a-a^{\#}a^2)p^{\pi}\\
&=&0.
\end{array}$$ Hence, $ap^{\pi}=(a^{\#}p^{\pi})(ap^{\pi})^2$. It follows that
$$\begin{array}{lll}
(ap^{\pi})(ap^{\pi})^{\#}&=&(a^{\#}p^{\pi})(ap^{\pi})^2(ap^{\pi})^{\#}\\
&=&(a^{\#}p^{\pi})(ap^{\pi}))\\
&=&(aa^{\#})p^{\pi},
\end{array}$$ as required.\end{proof}

\begin{lem} Let $a\in \mathcal{A}^{\tiny\textcircled{\#}}$ and $b\in \mathcal{A}$. Then the following are equivalent:\end{lem}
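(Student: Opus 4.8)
The plan is to reduce everything to a $2\times 2$ Pierce picture over the projection $p=aa^{\tiny\textcircled{\#}}$. Since $a\in\mathcal{A}^{\tiny\textcircled{\#}}$, Theorem 1.1(2) gives $axa=a$, $xax=x$ and $(ax)^{*}=ax$ for $x=a^{\tiny\textcircled{\#}}$, so $p:=ax$ is a projection ($p^{2}=p=p^{*}$) with $pa=a$ and $x=px$. Reading these relations off in the $p$-representation, both $a$ and $a^{\tiny\textcircled{\#}}$ become upper triangular, $a=\begin{pmatrix}a_{1}&a_{2}\\0&0\end{pmatrix}_{p}$ and $a^{\tiny\textcircled{\#}}=\begin{pmatrix}x_{1}&x_{2}\\0&0\end{pmatrix}_{p}$ with $a_{1}x_{1}=p$, while $b=\begin{pmatrix}b_{1}&b_{2}\\b_{3}&b_{4}\end{pmatrix}_{p}$ is arbitrary. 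My first step is to record these normal forms once and for all, since each of the listed conditions then turns into a statement about the blocks $b_{1},b_{2},b_{3},b_{4}$ and their behaviour under $*$.

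Next I would dispatch the elementary implications by pure substitution: feeding the strongest condition in the list into the three defining core-inverse equations $xa^{2}=a$, $ax^{2}=x$, $(ax)^{*}=ax$ and multiplying out block by block immediately yields the weaker ones, using only $a_{1}x_{1}=p$ and $p^{*}=p$. These directions are routine and I would not belabour them.

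The real work is the converse, where from a block (or annihilator) hypothesis I must actually produce the core inverse of the relevant element, or equivalently exhibit the projection $q$ of Theorem 1.1(4) with $q(\,\cdot\,)=0$ and $(\,\cdot\,)+q$ invertible. The natural route is to build the candidate inverse in triangular form and verify Theorem 1.1(2); the group-invertibility bookkeeping this forces---showing that a corner element lies in $\mathcal{A}^{\#}$ and identifying its spectral idempotent---is precisely what Lemma 2.1 delivers through the identity $(ap^{\pi})(ap^{\pi})^{\#}=(aa^{\#})p^{\pi}$, so I would invoke Lemma 2.1 rather than recompute it. The main obstacle I anticipate is the coupling introduced by the involution together with the off-diagonal block $a_{2}$ (that is, the fact that $ap\neq a$ in general, so $a$ is only triangular, not diagonal, over $p$): the requirement $(ax)^{*}=ax$ ties the $(1,1)$- and $(1,2)$-blocks together, and preserving that compatibility while inverting is the delicate point. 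I would manage it by first treating the diagonal (EP) case $a_{2}=0$, where $p$ commutes with $a$ and all four equations decouple, and then reducing the general triangular case to it by a $*$-preserving correction that clears $a_{2}$.
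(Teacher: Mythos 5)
The lemma you were asked to prove asserts, for $a\in\mathcal{A}^{\tiny\textcircled{\#}}$ and an arbitrary $b\in\mathcal{A}$, the equivalence of the two conditions $(1-a^{\tiny\textcircled{\#}}a)b=0$ and $(1-aa^{\tiny\textcircled{\#}})b=0$; the paper disposes of it simply by citing \cite[Lemma 2.4]{XS}. Your proposal does not engage with this statement at all. What you describe --- fixing the Pierce decomposition over $p=aa^{\tiny\textcircled{\#}}$, ``feeding the strongest condition in the list'' into the defining equations, and then, in the converse direction, ``producing the core inverse of the relevant element'' via a triangular candidate and Lemma 2.1 --- is a plan for a multi-clause characterization of core invertibility in the style of Theorem 1.1 or Lemma 2.3, not for this two-condition annihilator identity. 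Here nothing has to be constructed: $b$ is arbitrary and carries no structure, no new element is claimed to be core invertible, the involution enters only through the already-known properties of $a^{\tiny\textcircled{\#}}$, and Lemma 2.1 (about $(ap^{\pi})(ap^{\pi})^{\#}=(aa^{\#})p^{\pi}$) is irrelevant. Moreover, even on its own terms the proposal is only a strategy; no implication is actually carried out.

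The argument that is actually needed is two lines of substitution using the defining identities $a^{\tiny\textcircled{\#}}a^{2}=a$ and $a(a^{\tiny\textcircled{\#}})^{2}=a^{\tiny\textcircled{\#}}$. If $(1-a^{\tiny\textcircled{\#}}a)b=0$, i.e.\ $b=a^{\tiny\textcircled{\#}}ab$, then
$$aa^{\tiny\textcircled{\#}}b=aa^{\tiny\textcircled{\#}}\bigl(a^{\tiny\textcircled{\#}}ab\bigr)=\bigl(a(a^{\tiny\textcircled{\#}})^{2}\bigr)ab=a^{\tiny\textcircled{\#}}ab=b,$$
so $(1-aa^{\tiny\textcircled{\#}})b=0$. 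Conversely, if $b=aa^{\tiny\textcircled{\#}}b$, then
$$a^{\tiny\textcircled{\#}}ab=a^{\tiny\textcircled{\#}}a\bigl(aa^{\tiny\textcircled{\#}}b\bigr)=\bigl(a^{\tiny\textcircled{\#}}a^{2}\bigr)a^{\tiny\textcircled{\#}}b=aa^{\tiny\textcircled{\#}}b=b.$$
That is the whole proof; the machinery you set up is not wrong so much as aimed at a different theorem.
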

\begin{enumerate}
\item [(1)] $(1-a^{\tiny\textcircled{\#}}a)b=0$.
\vspace{-.5mm}
\item [(2)] $(1-aa^{\tiny\textcircled{\#}})b=0$.
\end{enumerate}
\begin{proof} See~\cite[Lemma 2.4]{XS}.\end{proof}

\begin{lem} Let $p\in \mathcal{A}$ be a projection, $a\in \mathcal{A}$ and $pap^{\pi}=0$. If $pap, ap^{\pi}\in \mathcal{A}^{\tiny\textcircled{\#}}$ and $(ap^{\pi})^{\pi}p^{\pi}ap=0,$ then $a\in \mathcal{A}^{\tiny\textcircled{\#}}$ and $pa^{\tiny\textcircled{\#}}p^{\pi}=0$.\end{lem}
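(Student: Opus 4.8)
The plan is to put $a$ in Pierce form relative to $p$ and reduce everything to the two diagonal corners. Since $p^{\pi}=1-p$ and $pap^{\pi}=0$ we have $pa=pap$, hence the $(1,2)$ block of $a$ vanishes and
$a=a_1+a_3+a_4$ is block lower triangular with $a_1=pap$, $a_3=p^{\pi}ap$ and $a_4=p^{\pi}ap^{\pi}=ap^{\pi}$, the last equality following from $pap^{\pi}=0$. So the two invertibility hypotheses become statements about the corners: $a_1=pap\in(p\mathcal{A}p)^{\tiny\textcircled{\#}}$ and $a_4=ap^{\pi}\in(p^{\pi}\mathcal{A}p^{\pi})^{\tiny\textcircled{\#}}$ (core invertibility in $\mathcal{A}$ of an element supported on a projection agrees with core invertibility in the corner, and the core inverse is supported there). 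The remaining hypothesis also simplifies: $(ap^{\pi})^{\pi}=1-a_4a_4^{\#}$ and $p^{\pi}ap=a_3$, so $(ap^{\pi})^{\pi}p^{\pi}ap=0$ reads $a_3=a_4a_4^{\#}a_3$, and in particular $a_3\in a_4\mathcal{A}$.

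Next I would produce the projection demanded by Theorem 1.1(4). Applying Theorem 1.1(4) inside each corner (a Banach $*$-algebra with unit $p$, resp.\ $p^{\pi}$) yields projections $q_1\in p\mathcal{A}p$ and $q_4\in p^{\pi}\mathcal{A}p^{\pi}$ with $q_1a_1=0$, $q_4a_4=0$ and with $a_1+q_1$, $a_4+q_4$ invertible in $p\mathcal{A}p$, $p^{\pi}\mathcal{A}p^{\pi}$ respectively. As $q_1,q_4$ lie in orthogonal corners, $q:=q_1+q_4$ is a block-diagonal projection of $\mathcal{A}$. I then check $qa=0$ entrywise: the diagonal contributions vanish by $q_1a_1=q_4a_4=0$; the corner-mismatched products $q_1a_3$, $q_1a_4$, $q_4a_1$ vanish because $pp^{\pi}=0$; and the one genuinely nontrivial entry $q_4a_3$ vanishes because $a_3\in a_4\mathcal{A}$ together with $q_4a_4=0$ gives $q_4a_3\in q_4a_4\mathcal{A}=0$. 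This is exactly where the hypothesis $(ap^{\pi})^{\pi}p^{\pi}ap=0$ is consumed.

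With $qa=0$ secured, I would show $a+q\in\mathcal{A}^{-1}$. In Pierce form $a+q=\left(\begin{smallmatrix}a_1+q_1&0\\a_3&a_4+q_4\end{smallmatrix}\right)_p$ is block lower triangular with invertible diagonal corners, so it is invertible and its inverse $w:=(a+q)^{-1}$ is again block lower triangular (its $(1,2)$ block being $0$). I would then exhibit the core inverse explicitly as $x:=w(1-q)$ and verify the three defining identities $xa^2=a$, $ax^2=x$, $(ax)^*=ax$ directly: from $(a+q)w=1$, $w(a+q)=1$, $q(a+q)=q$ and $qa=0$ one reads off $aw=1-q$, $qw=q$ and $wa=1-wq$, whence $ax=1-q$ is a projection, $ax^2=(1-q)x=x$ (using $qw(1-q)=0$), and $xa^2=wa^2=(1-wq)a^2=a$. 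Because $w$ is block lower triangular and $(1-q)p^{\pi}=p^{\pi}-q_4\in p^{\pi}\mathcal{A}p^{\pi}$, the product $x=w(1-q)$ stays block lower triangular, so $pa^{\tiny\textcircled{\#}}p^{\pi}=pxp^{\pi}=0$; both conclusions drop out simultaneously.

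The main obstacle I anticipate is bookkeeping rather than a single deep computation: I must be sure the corner core inverses really live in $p\mathcal{A}p$ and $p^{\pi}\mathcal{A}p^{\pi}$, so that $q_1,q_4$ are honest orthogonal projections and the lower-triangular structure (hence the vanishing $(1,2)$ block of $x$) is preserved throughout. The only genuinely load-bearing step is the identity $q_4a_3=0$, so the crux is translating the hypothesis $(ap^{\pi})^{\pi}p^{\pi}ap=0$ — phrased through the group-inverse spectral idempotent — into the membership $a_3\in a_4\mathcal{A}$ and then combining it with $q_4a_4=0$. A secondary point to handle carefully is confirming that $(a+q)^{-1}(1-q)$ is \emph{the} core inverse, i.e.\ that the verification of $xa^2=a$, $ax^2=x$, $(ax)^*=ax$ indeed closes once $aw=1-q$ and $qw=q$ are in place.
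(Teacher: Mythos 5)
Your proposal is correct, but it reaches the conclusion by a genuinely different route than the paper. The paper also starts from the block lower triangular Pierce form $a=\left(\begin{smallmatrix}pap&0\\p^{\pi}ap&ap^{\pi}\end{smallmatrix}\right)_p$, but then proceeds by first securing $a\in\mathcal{A}^{\#}$ from the triangular group-inverse result \cite[Proposition 2.3]{XS}, and separately exhibiting an explicit block $(1,3)$-inverse $x=\left(\begin{smallmatrix}(pap)^{\tiny\textcircled{\#}}&0\\ -(ap^{\pi})^{\tiny\textcircled{\#}}(p^{\pi}ap)(pap)^{\tiny\textcircled{\#}}&(ap^{\pi})^{\tiny\textcircled{\#}}\end{smallmatrix}\right)_p$; the hypothesis $(ap^{\pi})^{\pi}p^{\pi}ap=0$ is consumed there in checking the $(2,1)$ entry of $axa=a$, and core invertibility then follows from ``group invertible plus $(1,3)$-invertible'' (\cite[Lemma 2.1]{XS}), with $pa^{\tiny\textcircled{\#}}p^{\pi}=0$ read off from $a^{\tiny\textcircled{\#}}=a^{\#}ax$ being a product of lower triangular factors. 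You instead invoke characterization (4) of Theorem 1.1 in each corner algebra, assemble the block-diagonal projection $q=q_1+q_4$, consume the same hypothesis in the single identity $q_4a_3=q_4a_4a_4^{\#}a_3=0$, and verify directly that $(a+q)^{-1}(1-q)$ satisfies the three defining equations. Your argument is more self-contained (it does not lean on the two cited results from \cite{XS}) and yields the clean closed form $a^{\tiny\textcircled{\#}}=(a+q)^{-1}(1-q)$; the paper's version has the advantage of expressing $a^{\tiny\textcircled{\#}}$ explicitly in terms of the corner core inverses $(pap)^{\tiny\textcircled{\#}}$ and $(ap^{\pi})^{\tiny\textcircled{\#}}$, which is the shape reused in the later theorems. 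The two bookkeeping points you flag (that corner core inverses live in the corners, and that the three identities close once $aw=1-q$ and $qw=q$ are in hand) both check out; the first is exactly the containment $(pap)^{\tiny\textcircled{\#}}\in p\mathcal{A}p$ that the paper also asserts and uses.
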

\begin{proof}  Since $pap^{\pi}=0$, we have $a=\left(\begin{array}{cc}
pap&0\\
p^{\pi}ap&ap^{\pi}
\end{array}
\right)_p.$ By virtue of Theorem 1.1, $pap, ap^{\pi}\in \mathcal{A}^{\#}$. As $(ap^{\pi})^{\pi}p^{\pi}apap=0,$ it follows by ~\cite[Proposition 2.3]{XS} that
$a\in \mathcal{A}^{\#}$. Moreover, we have
$$a^{\#}=\left(
\begin{array}{cc}
(pap)^{\#}&0\\
z&(ap^{\pi})^{\#}
\end{array}
\right)_p,$$ where $$z=[(ap^{\pi})^{\#}]^2p^{\pi}ap(pap)^{\pi}+(ap^{\pi})^{\pi}p^{\pi}ap[(pap)^{\#}]^2-(ap^{\pi})^{\#}p^{\pi}ap(pap)^{\#}.$$
Since $p\in \mathcal{A}$ is a projection, we have $$(pap)^{\tiny\textcircled{\#}}=[p\cdot (pap)\cdot p]^{\tiny\textcircled{\#}}=p(pap)^{\tiny\textcircled{\#}}p\subseteq p\mathcal{A}p.$$ Similarly, $$(ap^{\pi})^{\tiny\textcircled{\#}}=(p^{\pi}ap^{\pi})^{\tiny\textcircled{\#}}\subseteq p^{\pi}\mathcal{A}p^{\pi}.$$
Let $$x=\left(
\begin{array}{cc}
(pap)^{\tiny\textcircled{\#}}&0\\
-(ap^{\pi})^{\tiny\textcircled{\#}}(p^{\pi}ap)(pap)^{\tiny\textcircled{\#}}&(ap^{\pi})^{\tiny\textcircled{\#}}
\end{array}
\right)_p.$$ Then we have
$$\begin{array}{rll}
ax&=&\left(\begin{array}{cc}
pap&0\\
p^{\pi}ap&ap^{\pi}
\end{array}
\right)_p\left(
\begin{array}{cc}
(pap)^{\tiny\textcircled{\#}}&0\\
-(ap^{\pi})^{\tiny\textcircled{\#}}(p^{\pi}ap)(pap)^{\tiny\textcircled{\#}}&(ap^{\pi})^{\tiny\textcircled{\#}}
\end{array}
\right)_p\\
&=&\left(
\begin{array}{cc}
(pap)(pap)^{\tiny\textcircled{\#}}&0\\
0&(ap^{\pi})(ap^{\pi})^{\tiny\textcircled{\#}}
\end{array}
\right)_p.
\end{array}$$ Hence $(ax)^*=[(pap)(pap)^{\tiny\textcircled{\#}}+(ap^{\pi})(ap^{\pi})^{\tiny\textcircled{\#}}]^*=ax$.
We further verify that $$\begin{array}{rll}
axa&=&\left(
\begin{array}{cc}
(pap)(pap)^{\tiny\textcircled{\#}}&0\\
0&(ap^{\pi})(ap^{\pi})^{\tiny\textcircled{\#}}
\end{array}
\right)_p\left(\begin{array}{cc}
pap&0\\
p^{\pi}ap&ap^{\pi}
\end{array}
\right)_p\\
&=&a,
\end{array}$$ and so $a\in \mathcal{A}^{(1,3)}$. According to ~\cite[Lemma 2.1]{XS}, $a$ has core inverse.

Moreover, we have $$\begin{array}{rl}
&a^{\tiny\textcircled{\#}}=a^{\#}ax=\left(
\begin{array}{cc}
(pap)^{\#}&0\\
z&(ap^{\pi})^{\#}
\end{array}
\right)_p\left(\begin{array}{cc}
pap&0\\
p^{\pi}ap&ap^{\pi}
\end{array}
\right)_p\\
&\left(
\begin{array}{cc}
(pap)^{\tiny\textcircled{\#}}&0\\
-(ap^{\pi})^{\tiny\textcircled{\#}}(p^{\pi}ap)(pap)^{\tiny\textcircled{\#}}&(ap^{\pi})^{\tiny\textcircled{\#}}
\end{array}
\right)_p\\
=&\left(
\begin{array}{cc}
*&0\\
*&*
\end{array}
\right)_p.
\end{array}$$ Therefore $pa^{\tiny\textcircled{\#}}p^{\pi}=0$, as asserted.\end{proof}

We are ready to prove:

\begin{thm} Let $a\in \mathcal{A}^{\tiny {EP}}, b, ba^{\pi}\in \mathcal{A}^{\tiny\textcircled{\#}}$ and $aba^{\pi}=0$. Then the following are equivalent:\end{thm}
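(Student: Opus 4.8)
The plan is to pass to the Pierce decomposition of $a+b$ along the projection $p:=aa^{\tiny\textcircled{\#}}$ and then invoke Lemma 2.3. Since $a\in\mathcal{A}^{EP}$ we have $a^{\tiny\textcircled{\#}}=a^{\#}$, the idempotent $p=aa^{\tiny\textcircled{\#}}=a^{\tiny\textcircled{\#}}a$ is a projection, $ap=pa=a$, and $a$ is invertible inside the corner $p\mathcal{A}p$ with inverse $a^{\#}$; note also $a^{\pi}=1-p=p^{\pi}$. First I would turn the hypothesis $aba^{\pi}=0$ into a triangularity statement: multiplying on the left by $a^{\#}$ and using $a^{\#}a=p$ gives $pba^{\pi}=pbp^{\pi}=0$, and since $ap^{\pi}=0$ this yields $p(a+b)p^{\pi}=0$. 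Thus $a+b$ is block lower triangular along $p$, with corners $p(a+b)p=a+aa^{\tiny\textcircled{\#}}baa^{\tiny\textcircled{\#}}$ and $(a+b)p^{\pi}=bp^{\pi}=ba^{\pi}$ (core invertible by hypothesis), and off-diagonal block $p^{\pi}(a+b)p=a^{\pi}baa^{\tiny\textcircled{\#}}$.

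For the sufficiency direction I would apply Lemma 2.3 to the element $a+b$ with the projection $p$. The hypothesis $p(a+b)p^{\pi}=0$ has just been verified, and $(a+b)p^{\pi}=ba^{\pi}\in\mathcal{A}^{\tiny\textcircled{\#}}$ by assumption, so it remains to assume that the corner $a+aa^{\tiny\textcircled{\#}}baa^{\tiny\textcircled{\#}}$ is core invertible and that the compatibility condition $(ba^{\pi})^{\pi}a^{\pi}baa^{\tiny\textcircled{\#}}=0$ holds. Lemma 2.3 then delivers $a+b\in\mathcal{A}^{\tiny\textcircled{\#}}$ together with $aa^{\tiny\textcircled{\#}}(a+b)^{\tiny\textcircled{\#}}a^{\pi}=0$, which is exactly the other side of the stated equivalence.

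The real work, and where I expect the main obstacle, is the necessity direction. Assuming $a+b\in\mathcal{A}^{\tiny\textcircled{\#}}$ and $aa^{\tiny\textcircled{\#}}(a+b)^{\tiny\textcircled{\#}}a^{\pi}=0$ means that $(a+b)^{\tiny\textcircled{\#}}$ is itself block lower triangular along $p$, and I must push core invertibility down to the corner $p(a+b)p$ and recover the compatibility equation. The method is to write both $a+b$ and $(a+b)^{\tiny\textcircled{\#}}$ as $2\times2$ matrices over $p$ and expand the defining identities $(a+b)^{\tiny\textcircled{\#}}(a+b)^{2}=a+b$, $(a+b)[(a+b)^{\tiny\textcircled{\#}}]^{2}=(a+b)^{\tiny\textcircled{\#}}$ and $((a+b)(a+b)^{\tiny\textcircled{\#}})^{*}=(a+b)(a+b)^{\tiny\textcircled{\#}}$ blockwise. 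The $(1,1)$-entries identify the $(1,1)$-corner of $(a+b)^{\tiny\textcircled{\#}}$ as a $(1,3)$-inverse of $a+aa^{\tiny\textcircled{\#}}baa^{\tiny\textcircled{\#}}$ in $p\mathcal{A}p$, whence, together with the group invertibility inherited from the triangular form, that corner is core invertible by \cite[Lemma 2.1]{XS}; the off-diagonal $(2,1)$-block of the product identity, after multiplying by $(ba^{\pi})^{\pi}$ and using $(ba^{\pi})^{\pi}(ba^{\pi})=0$, is designed to force $(ba^{\pi})^{\pi}a^{\pi}baa^{\tiny\textcircled{\#}}=0$. The delicate point is twofold: the involution carries the corner $p\mathcal{A}p^{\pi}$ onto $p^{\pi}\mathcal{A}p$, so the Hermitian identity genuinely couples the two diagonal corners and must be untangled using the already-established triangularity of $(a+b)^{\tiny\textcircled{\#}}$; and the cancellation in the $(2,1)$-block (removing the surviving factor coming from the corner core inverse) has to be justified rather than assumed, which is the step I expect to require the most care. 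The standing hypothesis $b\in\mathcal{A}^{\tiny\textcircled{\#}}$ is what guarantees the regularity needed for these block inverses to exist.
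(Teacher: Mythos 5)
Your proposal matches the paper's own proof in both structure and detail: the paper likewise sets $p=aa^{\#}$ (a projection because $a$ is EP), derives the lower-triangular Pierce form of $a+b$ from $aba^{\pi}=0$, proves sufficiency by verifying the hypotheses of Lemma 2.3, and proves necessity by writing $(a+b)^{\tiny\textcircled{\#}}$ in triangular form and expanding the defining core-inverse identities blockwise to extract core invertibility of the corner $a(1+a^{\#}b)$ and the vanishing of the $(2,1)$ term. The only item to add is the reconciliation of your compatibility condition $(ba^{\pi})^{\pi}a^{\pi}baa^{\tiny\textcircled{\#}}=0$ with the stated condition $b^{\pi}a^{\pi}b=0$, which is exactly what Lemma 2.1 supplies via $(ba^{\pi})(ba^{\pi})^{\#}=bb^{\#}a^{\pi}$, together with the observation that $b^{\pi}a^{\pi}ba^{\pi}=0$ already follows from $aba^{\pi}=0$.
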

\begin{enumerate}
\item [(1)] $a+b\in \mathcal{A}^{\tiny\textcircled{\#}}$ and $a(a+b)^{\tiny\textcircled{\#}}a^{\pi}=0$.
\vspace{-.5mm}
\item [(2)] $a(1+a^{\#}b)\in \mathcal{A}^{\tiny\textcircled{\#}}$ and $b^{\pi}a^{\pi}b=0$.
\end{enumerate}
\begin{proof} Let $p=aa^{\#}$. Then $$a=\left(
\begin{array}{cc}
a_1&0\\
0&0
\end{array}
\right)_p, b=\left(
\begin{array}{cc}
b_1&0\\
b_3&b_4
\end{array}
\right)_p.$$
Then $$a+b=\left(
\begin{array}{cc}
a_1+b_1&0\\
b_3&b_4
\end{array}
\right)_p.$$ Here, $$a_1+b_1=aa^{\#}(a+b), b_4=a^{\pi}(a+b)a^{\pi}=ba^{\pi}.$$
We compute that $b_4^{\tiny\textcircled{\#}}b_4=b_4^{\#}b_4b_4^{(1,3)}b_4=b_4^{\#}b_4.$

$(1)\Rightarrow (2)$ Let $x=a+b$. We write
$$x=\left(
\begin{array}{cc}
c_1&0\\
c_2&c_3
\end{array}
\right)_p, x^{\tiny\textcircled{\#}}=\left(
\begin{array}{cc}
x_1&0\\
x_2&x_3
\end{array}
\right)_p.$$ In light of Theorem 1.1, we have
$$x=xx^{\tiny\textcircled{\#}}x, (xx^{\tiny\textcircled{\#}})^*=xx^{\tiny\textcircled{\#}}, (x^{\tiny\textcircled{\#}})^2x=x, x(x^{\tiny\textcircled{\#}})^2=x^{\tiny\textcircled{\#}}.$$
Hence, $c_1=c_1x_1c_1, (c_1x_1)^*=c_1x_1, x_1c_1^2=c_1, c_1x_1^2=x_1.$ Therefore $c_1=aa^{\#}(a+b)aa^{\#}=a(1+a^{\#}b)\in R^{\tiny\textcircled{\#}}$.

In view of \cite[Lemma 2.1]{XS}, $aa^{\#}(a+b)=a_1+b_1$ has group inverse. By virtue of ~\cite[Proposition 2.3]{XS}, we have
$$\begin{array}{lll}
(a+b)^{\#}&=&\left(
\begin{array}{cc}
w^{\#}&0\\
u&b^{\#}a^{\pi}
\end{array}
\right)_p,
\end{array}$$ where $w=aa^{\#}(a+b)$ and $$\begin{array}{lll}
u&=&(b_4^{\#})^2b_3w^{\pi}+b_4^{\pi}b_3(w^{\#})^2-b_4^{\#}b_3w^{\#}\\
&=&b^{\#}a^{\pi}b^{\#}a^{\pi}baa^{\#}w^{\pi}+b^{\pi}a^{\pi}b(w^{\#})^2-b^{\#}a^{\pi}bw^{\#}.
\end{array}$$
We check that
$$\begin{array}{ll}
&(a+b)^{\pi}(a+b)\\
=&\left(
\begin{array}{cc}
w^{\pi}&0\\
-bw^{\#}-ba^{\pi}u&b^{\pi}a^{\pi}
\end{array}
\right)_p\left(
\begin{array}{cc}
w&0\\
a^{\pi}baa^{\#}&ba^{\pi}
\end{array}
\right)_p\\
=&\left(
\begin{array}{cc}
w^{\pi}w&0\\
-(bw^{\#}+ba^{\pi}u)w+b^{\pi}a^{\pi}baa^{\#}&b^{\pi}a^{\pi}ba^{\pi}\\
\end{array}
\right)_p\\
=&0.
\end{array}$$ Hence, $-(bw^{\#}+ba^{\pi}u)w+b^{\pi}a^{\pi}baa^{\#}=0$. Then $b^{\pi}a^{\pi}baa^{\#}=b^{\pi}[-(bw^{\#}+ba^{\pi}u)w+b^{\pi}a^{\pi}baa^{\#}]=0.$
On the other hand, $b^{\pi}a^{\pi}ba^{\pi}=b^{\pi}ba^{\pi}-b^{\pi}a^{\#}aba^{\pi}=0$. Therefore $b^{\pi}a^{\pi}b=b^{\pi}a^{\pi}b[aa^{\#}+a^{\pi}]=0$.

$(2)\Rightarrow (1)$ We verify that
$$\begin{array}{lll}
p^{\pi}[1-b_4^{\tiny\textcircled{\#}}b_4]b_3&=&a^{\pi}[1-b_4^{\#}b_4]a^{\pi}baa^{\#}\\
&=&a^{\pi}[1-(ba^{\pi})(ba^{\pi})^{\#}]a^{\pi}baa^{\#}\\
&=&a^{\pi}[1-bb^{\#}a^{\pi}]a^{\pi}baa^{\#}\\
&=&a^{\pi}b^{\pi}a^{\pi}baa^{\#}\\
&=&0.
\end{array}$$
Hence,
$$\begin{array}{rll}
p(a+b)p^{\pi}&=&a^{\#}(aba^{\pi})=0,\\
p(a+b)p&=&a+aa^{\#}baa^{\#}=a(1+a^{\#}b)\in R^{\tiny\textcircled{\#}},\\
(a+b)p^{\pi}&=&ba^{\pi}\in R^{\tiny\textcircled{\#}}.
\end{array}$$
By virtue of ~\cite[Lemma 2.1]{XS}, $a+b\in \mathcal{A}^{\#}$. This implies that
$$\begin{array}{rll}
((a+b)p^{\pi})^{\pi}p^{\pi}(a+b)p&=&[1-(a+b)p^{\pi}((a+b)p^{\pi})^{\#}]p^{\pi}(a+b)p\\
&=&[1-p^{\pi}(a+b)(a+b)^{\#}]p^{\pi}(a+b)p\\
&=&p^{\pi}[1-(a+b)(a+b)^{\#}]p^{\pi}(a+b)p\\
&=&p^{\pi}[1-b_4^{\tiny\textcircled{\#}}b_4]b_3\\
&=&0.
\end{array}$$ According to Lemma 2.3, $a+b\in R^{\tiny\textcircled{\#}}$ and $a(a+b)^{\tiny\textcircled{\#}}a^{\pi}=a^{\#}p(a+b)^{\tiny\textcircled{\#}}p^{\pi}=0$.\end{proof}

\begin{cor} Let $a\in \mathcal{A}^{\tiny {EP}},b\in \mathcal{A}^{\tiny\textcircled{\#}}$. If $ab=ba$ and $a^*b=ba^*$, then $a+b\in \mathcal{A}^{\tiny\textcircled{\#}}$ if and only if $a(1+a^{\#}b)\in \mathcal{A}^{\tiny\textcircled{\#}}$.\end{cor}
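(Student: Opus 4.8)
The plan is to deduce this from Theorem 2.4, by checking that the two commutativity hypotheses simultaneously make that theorem applicable and render its two auxiliary equalities vacuous. First I would record the consequences of $a\in\mathcal{A}^{EP}$: the element $p:=aa^{\#}$ is a projection, $a^{\pi}=1-p$ is a projection, $aa^{\pi}=a^{\pi}a=0$, and the group inverse $a^{\#}$ lies in the double commutant of $a$. Since $ab=ba$, this double commutant property forces $a^{\#}b=ba^{\#}$, so $b$ commutes with $p$ and with $a^{\pi}$; passing to adjoints in $ab=ba$ and $a^{*}b=ba^{*}$ shows $b^{*}$ commutes with $a$ and $a^{*}$, hence with the Hermitian idempotent $a^{\pi}$ as well.

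Next I would verify the standing hypotheses of Theorem 2.4. The equality $aba^{\pi}=baa^{\pi}=0$ is immediate. For $ba^{\pi}\in\mathcal{A}^{\tiny\textcircled{\#}}$, I would observe that, $b$ commuting with the projection $a^{\pi}$, the Pierce decomposition of $b$ along $p$ is block diagonal with diagonal corners $pbp$ and $ba^{\pi}=a^{\pi}ba^{\pi}$ (the off-diagonal corners $pba^{\pi}=bpa^{\pi}$ and $a^{\pi}bp=ba^{\pi}p$ vanish). The point I would isolate as a lemma is that such a block-diagonal element is core invertible precisely when each diagonal corner is core invertible in its own corner algebra; granting this, the hypothesis $b\in\mathcal{A}^{\tiny\textcircled{\#}}$ yields $ba^{\pi}\in(a^{\pi}\mathcal{A}a^{\pi})^{\tiny\textcircled{\#}}\subseteq\mathcal{A}^{\tiny\textcircled{\#}}$, so Theorem 2.4 applies.

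I would then show that both auxiliary equalities hold automatically. For condition (2), since $b^{\#}$ lies in the double commutant of $b$ it too commutes with $a^{\pi}$, hence so does $b^{\pi}=1-bb^{\#}$, whence $b^{\pi}a^{\pi}b=a^{\pi}b^{\pi}b=0$ because $b^{\pi}b=0$; thus (2) reduces to ``$a(1+a^{\#}b)\in\mathcal{A}^{\tiny\textcircled{\#}}$''. For condition (1), assuming $a+b\in\mathcal{A}^{\tiny\textcircled{\#}}$, I note that $a+b$ commutes with the projection $a^{\pi}$, so by the same lemma its core inverse is block diagonal along $p$ and commutes with $a^{\pi}$; therefore $a(a+b)^{\tiny\textcircled{\#}}a^{\pi}=aa^{\pi}(a+b)^{\tiny\textcircled{\#}}=0$, and (1) reduces to ``$a+b\in\mathcal{A}^{\tiny\textcircled{\#}}$''. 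With both auxiliary conditions vacuous, the equivalence (1)$\Leftrightarrow$(2) of Theorem 2.4 is exactly $a+b\in\mathcal{A}^{\tiny\textcircled{\#}}\Leftrightarrow a(1+a^{\#}b)\in\mathcal{A}^{\tiny\textcircled{\#}}$.

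The hard part is the structural lemma: if $q$ is a projection and $c$ commutes with $q$, then $c$ is core invertible iff its diagonal corners $qcq$ and $q^{\pi}cq^{\pi}$ are core invertible in $q\mathcal{A}q$ and $q^{\pi}\mathcal{A}q^{\pi}$. I would prove it through Theorem 1.1(5). Group invertibility descends to the corners because $c^{\#}$ lies in the double commutant of $c$ and hence commutes with $q$, so $c^{\#}$ is itself block diagonal. For the second half of Theorem 1.1(5), since $q=q^{*}$ the adjoint $c^{*}$ also commutes with $q$, so $c^{*}c$ is block diagonal; writing $c=uc^{*}c$ and compressing by $q$ then gives $qcq=(quq)(qc^{*}cq)$ with $qc^{*}cq=(qcq)^{*}(qcq)$, which is precisely the left-ideal condition $q\mathcal{A}q\cdot(qcq)=q\mathcal{A}q\cdot(qcq)^{*}(qcq)$ in the corner. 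Keeping every compression inside the correct corner, and checking that the self-adjoint idempotent $cc^{\tiny\textcircled{\#}}$ and the five defining identities genuinely restrict, is the step needing the most care; it is here that the EP-ness of $a$ (equivalently, the self-adjointness of $a^{\pi}$) is essential.
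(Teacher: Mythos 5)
Your proposal is correct, and its skeleton is the same as the paper's: reduce to Theorem 2.4 by checking the standing hypotheses ($aba^{\pi}=0$ and $ba^{\pi}\in\mathcal{A}^{\tiny\textcircled{\#}}$) and showing that the two side conditions $a(a+b)^{\tiny\textcircled{\#}}a^{\pi}=0$ and $b^{\pi}a^{\pi}b=0$ hold automatically under the commutativity hypotheses. The difference lies in how the two nontrivial sub-steps are justified. The paper gets them by citation: it applies the commutation lemma (Lemma 3.1, i.e.\ \cite[Corollary 3.4]{CZP}, that $xy=yx$ and $x^{*}y=yx^{*}$ force $x^{\tiny\textcircled{\#}}y=yx^{\tiny\textcircled{\#}}$) to the pair $a+b$ and $a^{\pi}$ --- using $(a^{\pi})^{*}=a^{\pi}$ from EP-ness --- to conclude $a^{\pi}(a+b)^{\tiny\textcircled{\#}}=(a+b)^{\tiny\textcircled{\#}}a^{\pi}$, and Lemma 3.3 to get $(ba^{\pi})^{\tiny\textcircled{\#}}=b^{\tiny\textcircled{\#}}a^{\pi}$. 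You instead prove a self-contained Pierce-decomposition lemma (an element commuting with a projection $q$ is core invertible iff its two diagonal corners are, with the core inverse again block diagonal), via condition (5) of Theorem 1.1; your compression computation $qcq=(quq)(qcq)^{*}(qcq)$ is sound, and the block-diagonality of $(a+b)^{\tiny\textcircled{\#}}$ then follows from uniqueness of the core inverse once one checks that the sum of the two corner core inverses satisfies the three defining equations. Your route is longer but self-contained, and it is more transparent about where $a\in\mathcal{A}^{EP}$ enters (it is exactly what makes $a^{\pi}$ self-adjoint, so that the Pierce corners are $*$-closed); the paper's is shorter at the cost of importing the lemmas from \cite{CZP}. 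Two small points to tighten if you write this up: state the formula $c^{\tiny\textcircled{\#}}=(qcq)^{\tiny\textcircled{\#}}+(q^{\pi}cq^{\pi})^{\tiny\textcircled{\#}}$ explicitly as part of your lemma, since the commutation of $(a+b)^{\tiny\textcircled{\#}}$ with $a^{\pi}$ is what you actually use and it does not follow from the bare ``iff''; and record the one-line verifications $aba^{\pi}=aa^{\pi}b=0$ and $b^{\pi}a^{\pi}b=a^{\pi}b^{\pi}b=0$, which the paper also leaves implicit.
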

\begin{proof} Since $ab=ba$ and $a^*b=ba^*$, we have $a^{\pi}(a+b)=(a+b)a^{\pi}$. Since $a\in \mathcal{A}^{\tiny {EP}}$, we have $(a^{\pi})^*=a^{\pi}$ by ~\cite[Proposition 2.5]{KP}. Hence $a^{\pi}(a+b)^*=(a+b)^*a^{\pi}$. In light of Lemma 2.1 that $a^{\pi}(a+b)^{\tiny\textcircled{\#}}=(a+b)^{\tiny\textcircled{\#}}a^{\pi}$.
Accordingly, $a(a+b)^{\tiny\textcircled{\#}}a^{\pi}=0$. Clearly, $(ba^{\pi})^{\tiny\textcircled{\#}}=b^{\tiny\textcircled{\#}}a^{\pi}$.
This completes the proof by Theorem 2.4.\end{proof}

We come now to the main result of this section.

\begin{thm} Let $a,b\in \mathcal{A}^{EP},ab^{\pi},ba^{\pi}\in \mathcal{A}^{\tiny\textcircled{\#}}$, $aba^{\pi}=bab^{\pi}=0$. Then the following are equivalent:\end{thm}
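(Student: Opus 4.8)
The plan is to reduce the symmetric two-EP situation to a single application of Theorem 2.4, which already treats the mixed ``one EP, one core invertible'' case, and then to exploit the additional hypotheses symmetrically in order to rewrite the one-sided output of Theorem 2.4 in the symmetric form demanded by conditions (1) and (2) of the statement. The whole argument is driven by a Pierce decomposition relative to $p=aa^{\#}$, exactly as in the proof of Theorem 2.4.

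First I would fix $p=aa^{\#}$. Since $a\in\mathcal{A}^{EP}$ this is a genuine projection ($p=p^2=p^*$ and $a^{\#}a=aa^{\#}$), and $ap^{\pi}=p^{\pi}a=0$, so $a=\left(\begin{array}{cc} a_1&0\\ 0&0\end{array}\right)_p$ with $a_1=pap$ invertible in the corner $p\mathcal{A}p$. The hypothesis $aba^{\pi}=0$ then forces $a_1\,pbp^{\pi}=0$, whence $pbp^{\pi}=0$ because $a_1$ is invertible in $p\mathcal{A}p$; thus $b=\left(\begin{array}{cc} b_1&0\\ b_3&b_4\end{array}\right)_p$ is lower triangular relative to $p$, precisely the normal form used in Theorem 2.4. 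Since $b\in\mathcal{A}^{EP}\subseteq\mathcal{A}^{\tiny\textcircled{\#}}$ and $ba^{\pi}\in\mathcal{A}^{\tiny\textcircled{\#}}$ by hypothesis, all the hypotheses of Theorem 2.4 are in force. Moreover, by the perfect symmetry of the data under $a\leftrightarrow b$ — namely $b\in\mathcal{A}^{EP}$, $a,ab^{\pi}\in\mathcal{A}^{\tiny\textcircled{\#}}$ and $bab^{\pi}=0$ — Theorem 2.4 also applies with the roles of $a$ and $b$ interchanged, relative to the projection $q=bb^{\#}$.

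Next I would run the target equivalence through these two instances of Theorem 2.4. The $a$-side application gives that $a+b\in\mathcal{A}^{\tiny\textcircled{\#}}$ with $a(a+b)^{\tiny\textcircled{\#}}a^{\pi}=0$ is equivalent to the pair $a(1+a^{\#}b)\in\mathcal{A}^{\tiny\textcircled{\#}}$ and $b^{\pi}a^{\pi}b=0$, while the $b$-side application yields the corresponding statement with $a$ and $b$ swapped. Throughout I would use Lemma 2.2 to pass freely between the conditions $(1-a^{\tiny\textcircled{\#}}a)(\cdot)=0$ and $(1-aa^{\tiny\textcircled{\#}})(\cdot)=0$, Lemma 2.1 to commute spectral idempotents past the relevant elements, and the self-adjointness $(a^{\pi})^*=a^{\pi}$, $(b^{\pi})^*=b^{\pi}$ — valid since $a,b$ are EP, by~\cite[Proposition 2.5]{KP} — to control the adjoint terms produced by the core-inverse identities $(ax)^*=ax$.

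The main obstacle will be reconciling the two one-sided outputs of Theorem 2.4 into the single symmetric equivalence of the statement: concretely, showing that the corner condition $a(a+b)^{\tiny\textcircled{\#}}a^{\pi}=0$ coming from the $a$-side application, together with the extra hypotheses $bab^{\pi}=0$ and $ab^{\pi}\in\mathcal{A}^{\tiny\textcircled{\#}}$, forces the corresponding $b$-side corner condition $b(a+b)^{\tiny\textcircled{\#}}b^{\pi}=0$, and that the two nilpotency-type identities $b^{\pi}a^{\pi}b=0$ and $a^{\pi}b^{\pi}a=0$ collapse to the symmetric condition appearing in (2). I expect this matching step to require computing the spectral idempotent $(a+b)^{\pi}$ in Pierce coordinates relative to $p=aa^{\#}$ — mirroring the display for $(a+b)^{\pi}(a+b)$ in the proof of Theorem 2.4 — and then multiplying on suitable sides by the projections $aa^{\#}$, $bb^{\#}$ and their complements $a^{\pi}$, $b^{\pi}$ to isolate each block and read off the desired cross-conditions.
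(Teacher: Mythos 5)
Your reduction to Theorem 2.4 relative to $p=aa^{\#}$ is exactly how the paper begins, and the symmetric second application with $a$ and $b$ interchanged is legitimate, since the hypotheses are indeed symmetric. But the plan has a genuine gap at precisely the point you flag as ``the main obstacle'': the two applications of Theorem 2.4 produce the conditions $a(1+a^{\#}b)=a+aa^{\#}b\in \mathcal{A}^{\tiny\textcircled{\#}}$ and $b(1+b^{\#}a)=b+bb^{\#}a\in \mathcal{A}^{\tiny\textcircled{\#}}$, whereas condition (2) of the theorem asks for core invertibility of the different element $aa^{\#}b+bb^{\#}a$. Computing $(a+b)^{\pi}$ in Pierce coordinates relative to $p=aa^{\#}$ and multiplying by $a^{\pi}$, $b^{\pi}$ will not convert one into the other; the passage between $a+aa^{\#}b$ and $aa^{\#}b+bb^{\#}a$ is the real content of the theorem beyond Theorem 2.4, and your outline does not contain the idea that accomplishes it.

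The missing idea is a second Pierce decomposition relative to the product idempotent $q=aa^{\#}bb^{\#}$ (not $bb^{\#}$ alone, and not $p=aa^{\#}$). Because both $a$ and $b$ are EP, $(aa^{\#})^*=aa^{\#}$ and $(bb^{\#})^*=bb^{\#}$, and the hypotheses force $q^2=q=q^*$, so $q$ is a projection; one then computes, for $z=a+aa^{\#}b$, that $qzq^{\pi}=0$, $qzq=aa^{\#}b+bb^{\#}a$, $zq^{\pi}=ab^{\pi}\in\mathcal{A}^{\tiny\textcircled{\#}}$ and $(zq^{\pi})^{\pi}q^{\pi}zq=0$. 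Lemma 2.3 applied to this lower-triangular form (and the corner-extraction argument from the proof of Theorem 2.4 for the reverse direction) is what transfers core invertibility between $a(1+a^{\#}b)$ and $aa^{\#}b+bb^{\#}a$. Note also that condition (1) contains the extra cross term $ba^{\pi}(a+b)^{\tiny\textcircled{\#}}b^{\pi}=0$, which is not an output of either application of Theorem 2.4 and requires its own verification; the paper does this at the end of $(2)\Rightarrow(1)$ using the identity $(a+aa^{\#}b)^{\tiny\textcircled{\#}}=aa^{\#}(a+b)^{\tiny\textcircled{\#}}aa^{\#}$ together with the $q$-block structure, and your proposal does not address this term at all.
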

\begin{enumerate}
\item [(1)] $a+b\in \mathcal{A}^{\tiny\textcircled{\#}}, a(a+b)^{\tiny\textcircled{\#}}a^{\pi}=ba^{\pi}(a+b)^{\tiny\textcircled{\#}}b^{\pi}=b(a+b)^{\tiny\textcircled{\#}}b^{\pi}=0$.
\vspace{-.5mm}
\item [(2)] $aa^{\#}b+bb^{\#}a\in \mathcal{A}^{\tiny\textcircled{\#}}$ and $a^{\pi}b^{\pi}a=b^{\pi}a^{\pi}b=0$.
\end{enumerate}
\begin{proof}  $(1)\Rightarrow (2)$ In view of Theorem 2.4, $a(1+a^{\#}b)\in \mathcal{A}^{\tiny\textcircled{\#}}$ and $b^{\pi}a^{\pi}b=0$. Analogously,
$a^{\pi}b^{\pi}a=0$. As in the proof in Theorem 2.4, we see that
$(a(1+a^{\#}b))^{\tiny\textcircled{\#}}=aa^{\#}(a+b)^{\tiny\textcircled{\#}}aa^{\#}$.
Let $q=aa^{\#}bb^{\#}$. Since $aba^{\pi}=0$, we have $b=\left(
\begin{array}{cc}
aa^{\#}b&0\\
a^{\pi}baa^{\#}&ba^{\pi}
\end{array}
\right)_{aa^{\#}}$. In view of ~\cite[Lemma 2.1]{XS}, $ba^{\pi}\in \mathcal{A}^{\#}$. Then
$aa^{\#}b\in \mathcal{A}^{\#}$ and $(aa^{\#}b)^{\#}=aa^{\#}b^{\#}$ by ~\cite[Theorem 2.3]{MD}. Hence $q^2=q=(aa^{\#}b)(aa^{\#}b)^{\#}$.
Thus $$\begin{array}{rll}
q(a(1+a^{\#}b))(1-q)&=&aa^{\#}bb^{\#}ab^{\pi}+aa^{\#}bb^{\#}aa^{\#}b(1-aa^{\#}bb^{\#})\\
&=&aa^{\#}bb^{\#}aa^{\#}b-aa^{\#}bb^{\#}aa^{\#}b[1-a^{\pi}]bb^{\#})\\
&=&0,\\
q(a(1+a^{\#}b))^{\tiny\textcircled{\#}}(1-q)&=&qaa^{\#}(a+b)^{\tiny\textcircled{\#}}aa^{\#}(1-q)\\
&=&qaa^{\#}(a+b)^{\tiny\textcircled{\#}}aa^{\#}(1-aa^{\#}bb^{\#})\\
&=&qaa^{\#}(a+b)^{\tiny\textcircled{\#}}aa^{\#}b^{\pi}\\
&=&qaa^{\#}(a+b)^{\tiny\textcircled{\#}}b^{\pi}-qaa^{\#}(a+b)^{\tiny\textcircled{\#}}a^{\pi}b^{\pi}\\
&=&aa^{\#}bb^{\#}aa^{\#}(a+b)^{\tiny\textcircled{\#}}b^{\pi}\\
&=&0.
\end{array}$$
We may write
$$a+aa^{\#}b=\left(
\begin{array}{cc}
c_1&0\\
c_2&c_3
\end{array}
\right)_q, [a+aa^{\#}b]^{^{\tiny\textcircled{\#}}}=\left(
\begin{array}{cc}
x_1&0\\
x_2&x_3
\end{array}
\right)_q.$$
As in the proof of Theorem 2.4, we prove that $c_1^{\tiny\textcircled{\#}}=x_1$.
Moreover, we have
$$\begin{array}{rll}
c_1&=&aa^{\#}bb^{\#}a[1+a^{\#}b]aa^{\#}bb^{\#}\\
&=&aa^{\#}bb^{\#}abb^{\#}+aa^{\#}bb^{\#}aa^{\#}baa^{\#}bb^{\#}\\
&=&aa^{\#}bb^{\#}a+aa^{\#}bb^{\#}aa^{\#}b\\
&=&aa^{\#}(1-b^{\pi})a+aa^{\#}bb^{\#}(1-a^{\pi})b\\
&=&aa^{\#}a-aa^{\#}b^{\pi}a+aa^{\#}bb^{\#}b-aa^{\#}bb^{\#}a^{\pi}b\\
&=&a-b^{\pi}a+aa^{\#}b\\
&=&aa^{\#}b+bb^{\#}a.
\end{array}$$ Therefore $aa^{\#}b+bb^{\#}a\in R^{\tiny\textcircled{\#}}$.

$(2)\Rightarrow (1)$ Let $q=aa^{\#}bb^{\#}$. Then $q^2=q\in \mathcal{A}$.
Moreover, $q=aa^{\#}bb^{\#}aa^{\#}bb^{\#}=(1-a^{\pi})bb^{\#}aa^{\#}bb^{\#}=bb^{\#}aa^{\#}bb^{\#}-a^{\pi}bb^{\#}aa^{\#}=bb^{\#}aa^{\#}bb^{\#}$.
Since $a,b\in \mathcal{A}^{EP}$, we have $$(aa^{\#})^*=aa^{\#},(bb^{\#})^*=bb^{\#}.$$ Then $q^*=q$, i.e., $q\in \mathcal{A}$ is a projection.
In view of Theorem 2.4, it will suffice to prove that $a+aa^{\#}b=a(1+a^{\#}a)\in R^{\tiny\textcircled{\#}}$.
We check that
$$\begin{array}{rll}
qa(1-q)&=&aa^{\#}bb^{\#}a[1-aa^{\#}bb^{\#}]\\
&=&aa^{\#}bb^{\#}a-aa^{\#}bb^{\#}a\\
&=&aa^{\#}(1-b^{\pi})a-aa^{\#}bb^{\#}a\\
&=&a-(1-a^{\pi})b^{\pi}a-aa^{\#}bb^{\#}a\\
&=&a-b^{\pi}a-aa^{\#}(1-b^{\pi})a\\
&=&-b^{\pi}a+(1-a^{\pi})b^{\pi}a\\
&=&0,\\
qaa^{\#}b(1-q)&=&aa^{\#}bb^{\#}aa^{\#}b[1-aa^{\#}bb^{\#}]\\
&=&aa^{\#}bb^{\#}[aa^{\#}b-aa^{\#}baa^{\#}bb^{\#}]\\
&=&aa^{\#}bb^{\#}[aa^{\#}b-aa^{\#}b(1-a^{\pi})bb^{\#}]\\
&=&aa^{\#}bb^{\#}[aa^{\#}b-aa^{\#}b^2b^{\#}-aa^{\#}ba^{\pi}bb^{\#}]\\
&=&0,\\
\end{array}$$
$$\begin{array}{rll}
(1-q)aa^{\#}bq&=&[1-aa^{\#}bb^{\#}]aa^{\#}baa^{\#}bb^{\#}\\
&=&[1-aa^{\#}bb^{\#}]aa^{\#}b\\
&=&aa^{\#}b-aa^{\#}bb^{\#}(1-a^{\pi})b\\
&=&aa^{\#}(1-b^{\pi})a^{\pi})b\\
&=&0,\\
(1-q)aa^{\#}b(1-q)&=&[1-aa^{\#}bb^{\#}]aa^{\#}b[1-aa^{\#}bb^{\#}]\\
&=&[aa^{\#}b-aa^{\#}bb^{\#}(1-a^{\pi})b][1-aa^{\#}bb^{\#}]\\
&=&[aa^{\#}bb^{\#}a^{\pi})b][1-aa^{\#}bb^{\#}]\\
&=&[aa^{\#}(1-b^{\pi})a^{\pi})b][1-aa^{\#}bb^{\#}]\\
&=&0.
\end{array}$$
Then $$a=\left(
\begin{array}{cc}
a_1&0\\
a_3&a_4
\end{array}
\right)_q, aa^{\#}b=\left(
\begin{array}{cc}
b_1&0\\
0&0
\end{array}
\right)_q.$$ Here $a_1=qaq, a_3=q^{\pi}aq, a_4=q^{\pi}aq^{\pi}$.
Then $$a+aa^{\#}b=\left(
\begin{array}{cc}
a_1+b_1&0\\
a_3&a_4
\end{array}
\right)_q.$$ Here, we have
$$\begin{array}{rll}
a_4&=&(1-aa^{\#}bb^{\#})a(1-aa^{\#}bb^{\#})\\
&=&(1-aa^{\#}bb^{\#})ab^{\pi}\\
&=&ab^{\pi})-aa^{\#}bb^{\#}ab^{\pi}\\
&=&ab^{\pi}\\
&\in&\mathcal{A}^{\tiny\textcircled{\#}}.
\end{array}$$
Set $z=a+aa^{\#}b$. Then $zq^{\pi}=[a+aa^{\#}b][1-aa^{\#}bb^{\#}]=ab^{\pi}+aa^{\#}b-aa^{\#}baa^{\#}bb^{\#}=
ab^{\pi}+aa^{\#}b-aa^{\#}b(1-a^{\pi})bb^{\#}=ab^{\pi}\in R^{\tiny\textcircled{\#}}$. We verify that
$$\begin{array}{lll}
qzq&=&qaq+qaa^{\#}bq\\
&=&aa^{\#}bb^{\#}a+aa^{\#}bb^{\#}aa^{\#}b\\
&=&aa^{\#}(1-b^{\pi})a+aa^{\#}bb^{\#}(1-a^{\pi})b\\
&=&a-(1-a^{\pi})b^{\pi}a+aa^{\#}b-aa^{\#}(1-b^{\pi})a^{\pi}b\\
&=&a-b^{\pi}a+aa^{\#}b+aa^{\#}b^{\pi}a^{\pi}b\\
&=&aa^{\#}b+bb^{\#}a\\
&\in &R^{\tiny\textcircled{\#}}.
\end{array}$$
Moreover, we have
$$\begin{array}{rll}
qzq^{\pi}&=&qa(1-q)+qaa^{\#}b(1-q)=0,\\
(zq^{\pi})^{\pi}q^{\pi}zq&=&q^{\pi}a_4^{\pi}a_3\\
&=&q^{\pi}(ab^{\pi})^{\pi}q^{\pi}aq\\
&=&q^{\pi}[1-aa^{\#}b^{\pi}][1-aa^{\#}bb^{\#}]abb^{\#}\\
&=&q^{\pi}[1-aa^{\#}b^{\pi}][a-aa^{\#}bb^{\#}a]\\
&=&q^{\pi}[a-[1-a^{\pi}]b^{\pi}a][1-a^{\#}bb^{\#}a]\\
&=&[1-a^{\#}bb^{\#}]bb^{\#}[1-aa^{\#}bb^{\#}]a\\
&=&[1-a^{\#}bb^{\#}][bb^{\#}-bb^{\#}aa^{\#}bb^{\#}]a\\
&=&[1-a^{\#}bb^{\#}][1-b^{\pi}]a^{\pi}bb^{\#}a\\
&=&[1-a^{\#}bb^{\#}]a^{\pi}[1-b^{\pi}]a\\
&=&0.
\end{array}$$ In light of Lemma 2.3, $a+aa^{\#}b=z\in R^{\tiny\textcircled{\#}}$.
Additionally, we have
$$\begin{array}{lll}
(a+aa^{\#}b)^{\tiny\textcircled{\#}}&=&\left(
\begin{array}{cc}
(a_1+b_1)^{\tiny\textcircled{\#}}&0\\
v&a_4^{\tiny\textcircled{\#}}
\end{array}
\right)_q,
\end{array},$$ where $v=-a_4^{\tiny\textcircled{\#}}a_3(a_1+b_1)^{\tiny\textcircled{\#}}.$

In light of Theorem 2.4, we prove that $a+b\in \mathcal{A}^{\tiny\textcircled{\#}}$. By the preceding discussion, we have $$\begin{array}{c}
aa^{\#}bb^{\#}(a+aa^{\#}b)^{\tiny\textcircled{\#}}(1-aa^{\#}bb^{\#})=0,\\
(a+aa^{\#}b)^{\tiny\textcircled{\#}}=aa^{\#}(a+b)^{\tiny\textcircled{\#}}aa^{\#}.
\end{array}$$ Therefore we get $$\begin{array}{rll}
ba^{\pi}(a+b)^{\tiny\textcircled{\#}}b^{\pi}&=&baa^{\#}(a+b)^{\tiny\textcircled{\#}}b^{\pi}\\
&=&baa^{\#}(a+b)^{\tiny\textcircled{\#}}(a^{\pi}+aa^{\#})b^{\pi}\\
&=&b[(bb^{\#}aa^{\#}][aa^{\#}(a+b)^{\tiny\textcircled{\#}}aa^{\#}]aa^{\#}b^{\pi}\\
&=&b[aa^{\#}-b^{\pi}aa^{\#}][aa^{\#}(a+b)^{\tiny\textcircled{\#}}aa^{\#}]aa^{\#}b^{\pi}\\
&=&baa^{\#}bb^{\#}aa^{\#}][aa^{\#}(a+b)^{\tiny\textcircled{\#}}aa^{\#}]aa^{\#}b^{\pi}\\
&=&b[aa^{\#}bb^{\#}](a+b)^{\tiny\textcircled{\#}}[aa^{\#}(1-aa^{\#}bb^{\#})]\\
&=&b[aa^{\#}bb^{\#}](a+b)^{\tiny\textcircled{\#}}[1-aa^{\#}bb^{\#}]\\
&=&0,
\end{array}$$ as asserted.\end{proof}

In \cite{ZCX}, Zhou et al. investigated the core inverse of $a+b$ in a Dedekind finite ring in which $2$ is invertible. We now derive

\begin{cor} Let $a,b\in \mathcal{A}^{EP},ab^{\pi},ba^{\pi}\in \mathcal{A}^{\tiny\textcircled{\#}}$ and $\frac{1}{2}\in \mathcal{A}$. If $aa^{\#}b=bb^{\#}a\in \mathcal{A}^{\tiny\textcircled{\#}}$, then $a+b\in \mathcal{A}^{\tiny\textcircled{\#}}$.\end{cor}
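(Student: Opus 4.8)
The plan is to reduce the statement to Theorem 2.6 by checking that its standing hypotheses and its condition (2) all hold; then the implication $(2)\Rightarrow(1)$ delivers $a+b\in\mathcal{A}^{\tiny\textcircled{\#}}$. Writing $c=aa^{\#}b=bb^{\#}a$, the corollary already supplies $a,b\in\mathcal{A}^{EP}$ and $ab^{\pi},ba^{\pi}\in\mathcal{A}^{\tiny\textcircled{\#}}$, so the first task is to produce the two equations $aba^{\pi}=bab^{\pi}=0$ that Theorem 2.6 requires, and the second is to verify that $aa^{\#}b+bb^{\#}a\in\mathcal{A}^{\tiny\textcircled{\#}}$ together with $a^{\pi}b^{\pi}a=b^{\pi}a^{\pi}b=0$.

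For the standing equations I would exploit the identity $a\cdot aa^{\#}=a$, valid because $aa^{\#}=a^{\#}a$ gives $a\cdot aa^{\#}=a\cdot a^{\#}a=a$, together with the analogous $b\cdot bb^{\#}=b$. Multiplying the hypothesis $aa^{\#}b=bb^{\#}a$ on the right by $a^{\pi}$ and using $aa^{\pi}=0$ (so that $bb^{\#}a\,a^{\pi}=bb^{\#}(aa^{\pi})=0$) gives $aa^{\#}ba^{\pi}=0$; left-multiplying this by $a$ and invoking $a\cdot aa^{\#}=a$ yields $aba^{\pi}=a\cdot aa^{\#}ba^{\pi}=0$. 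Symmetrically, right-multiplying $aa^{\#}b=bb^{\#}a$ by $b^{\pi}$ gives $bb^{\#}ab^{\pi}=0$, and left-multiplying by $b$ gives $bab^{\pi}=0$. Thus both hypotheses of Theorem 2.6 are in force.

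It remains to confirm condition (2). Since $aa^{\#}b=bb^{\#}a=c$, the relevant sum collapses to $aa^{\#}b+bb^{\#}a=2c$; because $c\in\mathcal{A}^{\tiny\textcircled{\#}}$ and $\frac{1}{2}\in\mathcal{A}$, the element $2c$ is core invertible with $(2c)^{\tiny\textcircled{\#}}=\tfrac{1}{2}c^{\tiny\textcircled{\#}}$, as one checks directly against the three defining equations, the self-adjointness of $(2c)(\tfrac12 c^{\tiny\textcircled{\#}})=cc^{\tiny\textcircled{\#}}$ being scalar-independent. For the annihilation conditions I would use $b^{\pi}a=a-bb^{\#}a=a-c$, so that $a^{\pi}b^{\pi}a=a^{\pi}a-a^{\pi}c=0$ since $a^{\pi}a=0$ and $a^{\pi}c=a^{\pi}aa^{\#}b=0$; the computation $b^{\pi}a^{\pi}b=0$ is identical after interchanging $a$ and $b$.

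With the hypotheses of Theorem 2.6 verified and its condition (2) established, the implication $(2)\Rightarrow(1)$ gives $a+b\in\mathcal{A}^{\tiny\textcircled{\#}}$, which is the assertion. The only step that is not purely mechanical is the passage from the equalities forced by $aa^{\#}b=bb^{\#}a$ to the genuine standing hypotheses $aba^{\pi}=bab^{\pi}=0$; this is where the identities $a\cdot aa^{\#}=a$ and $b\cdot bb^{\#}=b$ do the essential work, and I expect it to be the only place demanding any care.
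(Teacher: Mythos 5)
Your proposal is correct and follows essentially the same route as the paper: both reduce to Theorem 2.6 by deriving $aba^{\pi}=bab^{\pi}=0$ from the hypothesis $aa^{\#}b=bb^{\#}a$ via insertion of $a=a\cdot aa^{\#}$ (resp.\ $b=b\cdot bb^{\#}$), observing that $aa^{\#}b+bb^{\#}a=2aa^{\#}b$ is core invertible because $\frac12\in\mathcal{A}$, and checking $a^{\pi}b^{\pi}a=b^{\pi}a^{\pi}b=0$ by the same substitution. Your write-up is, if anything, slightly more careful than the paper's (e.g.\ in handling $b^{\pi}=1-bb^{\#}$ explicitly), but there is no substantive difference in approach.
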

\begin{proof} Since $aa^{\#}b=bb^{\#}a\in \mathcal{A}^{\tiny\textcircled{\#}}$ and $\frac{1}{2}\in \mathcal{A}$, we have
$aa^{\#}b+bb^{\#}a=2aa^{\#}b\in \mathcal{A}^{\tiny\textcircled{\#}}$. Also we have
$aba^{\pi}=aa^{\#}aba^{\pi}=abb^{\#}aa^{\pi}=0$. Similarly, we get $bab^{\pi}=bb^{\#}bab^{\pi}=baa^{\#}bb^{\pi}=0$.
Moreover, $a^{\pi}b^{\pi}a=a^{\pi}bb^{\#}a=a^{\pi}aa^{\#}b=0.$
Similarly, $b^{\pi}a^{\pi}b=0.$ This completes the proof by Theorem 2.6.\end{proof}

\section{communicative conditions}

In this section, the necessary and sufficient conditions under which the sum of two core invertible elements has core inverse are presented under certain commutative conditions. The following lemmas are crucial.

\begin{lem} (see~\cite[Corollary 3.4]{CZP})) Let $a,b\in \mathcal{A}^{\tiny\textcircled{\#}}$. If $ab=ba$ and $a^*b=ba^*$, then $a^{\tiny\textcircled{\#}}b=ba^{\tiny\textcircled{\#}}$. \end{lem}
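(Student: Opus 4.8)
The plan is to reduce the commutation of the core inverse to that of two simpler factors, via the structural identity
$$a^{\tiny\textcircled{\#}}=a^{\#}\bigl(aa^{\tiny\textcircled{\#}}\bigr).$$
To obtain it, write $x=a^{\tiny\textcircled{\#}}$; the defining relations $xa^{2}=a$ and $ax^{2}=x$ give $x=ax^{2}\in aR$ and $a=xa^{2}\in xR$, so $aR=xR$ and hence $x=ay$ for some $y\in\mathcal{A}$. Then, using the group-inverse identity $a^{\#}a^{2}=a$, we get $a^{\#}ax=a^{\#}a^{2}y=ay=x$, i.e.\ $a^{\#}(aa^{\tiny\textcircled{\#}})=a^{\tiny\textcircled{\#}}$. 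By Theorem 1.1(2) we also have $axa=a$ and $x=xax$, so $p:=aa^{\tiny\textcircled{\#}}$ is a projection: $p^{*}=p$ because $(ax)^{*}=ax$, and $p^{2}=axax=a(xax)=ax=p$; moreover $pR=aR$ since $p=ax\in aR$ and $a=pa\in pR$. Thus it suffices to show that both $a^{\#}$ and $p$ commute with $b$.

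For $a^{\#}b=ba^{\#}$ I would invoke the classical fact that the group inverse lies in the bicommutant of $a$. Since $ab=ba$, a short computation with the group-inverse identities shows that the idempotent $e=aa^{\#}=a^{\#}a$ satisfies $ebe=eb=be$, so $e$ commutes with $b$; inside the corner algebra $e\mathcal{A}e$ the element $a$ is invertible with inverse $a^{\#}$, and anything commuting with an invertible element commutes with its inverse, whence $a^{\#}b=ba^{\#}$.

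The heart of the proof, and the only place the hypothesis $a^{*}b=ba^{*}$ is used, is $pb=bp$. Applying the involution to the two hypotheses shows that $b^{*}$ also commutes with both $a$ and $a^{*}$; in particular, from $a^{*}b=ba^{*}$ we obtain $b^{*}a=ab^{*}$. I would then argue in two symmetric halves, each exploiting $pR=aR$. First,
$$bp=b\,aa^{\tiny\textcircled{\#}}=(ba)a^{\tiny\textcircled{\#}}=a\,(ba^{\tiny\textcircled{\#}})\in aR=pR,$$
so $p(bp)=bp$, that is $pbp=bp$. Second, the same computation with $b^{*}$ in place of $b$ gives $b^{*}p=a\,(b^{*}a^{\tiny\textcircled{\#}})\in pR$, hence $pb^{*}p=b^{*}p$; applying the involution and using $p^{*}=p$ turns this into $pbp=pb$. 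Comparing the two halves yields $bp=pbp=pb$.

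Assembling the three commutations gives
$$a^{\tiny\textcircled{\#}}b=a^{\#}pb=a^{\#}bp=ba^{\#}p=ba^{\tiny\textcircled{\#}},$$
as required. The main obstacle is the middle step $pb=bp$: whereas $a^{\#}$ commutes with everything that commutes with $a$, the range projection $p=aa^{\tiny\textcircled{\#}}$ is genuinely tied to the involution, so commutation with $a$ alone does not suffice, and one must bring in $b^{*}$, i.e.\ the second hypothesis $a^{*}b=ba^{*}$, through the two-sided adjoint argument above.
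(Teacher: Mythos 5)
Your proof is correct, and it is necessarily a different route from the paper's, because the paper supplies no argument for this lemma at all: it is stated with a bare citation to \cite[Corollary 3.4]{CZP}. What you have written is therefore a genuinely self-contained replacement, and it checks out line by line. The factorization $a^{\tiny\textcircled{\#}}=a^{\#}(aa^{\tiny\textcircled{\#}})$ is correctly derived (take $y=(a^{\tiny\textcircled{\#}})^{2}$ in $x=ay$), and it splits the problem cleanly into two independent commutations: $a^{\#}b=ba^{\#}$, which uses only $ab=ba$ via the double-commutant property of the group inverse, and $pb=bp$ for the projection $p=aa^{\tiny\textcircled{\#}}$, which is exactly where the second hypothesis $a^{*}b=ba^{*}$ enters. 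Your two-sided range argument is the nicest part: $bp=a(ba^{\tiny\textcircled{\#}})\in a\mathcal{A}=p\mathcal{A}$ gives $pbp=bp$, while the adjoint identity $b^{*}a=ab^{*}$ gives $b^{*}p\in p\mathcal{A}$, hence $pb^{*}p=b^{*}p$, which under $*$ becomes $pbp=pb$; comparing yields $pb=bp$ without ever writing an explicit formula for the core inverse. What the citation buys the authors is brevity; what your argument buys is transparency about where each hypothesis is used. Two small points you should make explicit if this is to stand as a written proof: first, that $a\in\mathcal{A}^{\#}$ follows from $a\in\mathcal{A}^{\tiny\textcircled{\#}}$ (Theorem 1.1), so that $a^{\#}$ exists at all; second, in the double-commutant step, that $ebe=eb=be$ for $e=aa^{\#}$ and $ea^{\#}=a^{\#}e=a^{\#}$, so the commutation of $ebe$ with the inverse of $eae$ inside $e\mathcal{A}e$ really does give $a^{\#}b=ba^{\#}$ in $\mathcal{A}$. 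Both are routine, but each deserves a line.
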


\begin{lem} (see~\cite[Theorem 4.3]{XCZ})) Let $a,b\in \mathcal{A}^{\tiny\textcircled{\#}}$. If $ab=0$ and $a^*b=0$, then $a+b\in \mathcal{A}^{\tiny\textcircled{\#}}$.\end{lem}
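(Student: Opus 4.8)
The plan is to verify the geometric characterization of Theorem 1.1(4): I will exhibit a single projection $f\in\mathcal{A}$ with $f(a+b)=0$ and $(a+b)+f\in\mathcal{A}^{-1}$, which immediately yields $a+b\in\mathcal{A}^{\tiny\textcircled{\#}}$. Everything hinges on converting the two hypotheses into orthogonality relations among the canonical projections $aa^{\tiny\textcircled{\#}}$ and $bb^{\tiny\textcircled{\#}}$ (these are projections by the defining identity $(ax)^*=ax$). Taking the involution of $a^*b=0$ gives $b^*a=0$. Since $aa^{\tiny\textcircled{\#}}$ is self-adjoint, $aa^{\tiny\textcircled{\#}}b=(aa^{\tiny\textcircled{\#}})^*b=(a^{\tiny\textcircled{\#}})^*a^*b=0$, and symmetrically $bb^{\tiny\textcircled{\#}}a=(b^{\tiny\textcircled{\#}})^*b^*a=0$. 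Combined with the given $ab=0$ (whence $abb^{\tiny\textcircled{\#}}=(ab)b^{\tiny\textcircled{\#}}=0$ and $aa^{\tiny\textcircled{\#}}bb^{\tiny\textcircled{\#}}=(aa^{\tiny\textcircled{\#}}b)b^{\tiny\textcircled{\#}}=0$), these are the only identities the argument needs.

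Next I would set $f=1-aa^{\tiny\textcircled{\#}}-bb^{\tiny\textcircled{\#}}$. From $aa^{\tiny\textcircled{\#}}bb^{\tiny\textcircled{\#}}=0$ and $bb^{\tiny\textcircled{\#}}aa^{\tiny\textcircled{\#}}=(bb^{\tiny\textcircled{\#}}a)a^{\tiny\textcircled{\#}}=0$ the two projections $aa^{\tiny\textcircled{\#}}$ and $bb^{\tiny\textcircled{\#}}$ are mutually orthogonal, so $aa^{\tiny\textcircled{\#}}+bb^{\tiny\textcircled{\#}}$ is itself a projection and hence so is $f$. Using $aa^{\tiny\textcircled{\#}}a=a$, $bb^{\tiny\textcircled{\#}}b=b$, $aa^{\tiny\textcircled{\#}}b=0$ and $bb^{\tiny\textcircled{\#}}a=0$, a one-line expansion gives $f(a+b)=(a+b)-aa^{\tiny\textcircled{\#}}a-bb^{\tiny\textcircled{\#}}b=0$.

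The core of the proof is a factorization of $(a+b)+f$. Put $u=a+1-aa^{\tiny\textcircled{\#}}$ and $v=b+1-bb^{\tiny\textcircled{\#}}$. Each is invertible: with respect to the projection $g=aa^{\tiny\textcircled{\#}}$ one has $(1-g)a=0$, so $u$ is block upper triangular with diagonal corners $gag$ and $1-g$, and $gag$ is invertible in $g\mathcal{A}g$ with inverse $a^{\tiny\textcircled{\#}}$ (indeed $gag\cdot a^{\tiny\textcircled{\#}}=g=a^{\tiny\textcircled{\#}}\cdot gag$); this is the invertible element underlying Theorem 1.1(4), and likewise for $v$. Expanding $uv$ and discarding every cross term by means of $ab=0$, $abb^{\tiny\textcircled{\#}}=0$, $aa^{\tiny\textcircled{\#}}b=0$ and $aa^{\tiny\textcircled{\#}}bb^{\tiny\textcircled{\#}}=0$, I expect exactly $uv=a+b+1-aa^{\tiny\textcircled{\#}}-bb^{\tiny\textcircled{\#}}=(a+b)+f$. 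As a product of two invertibles, $(a+b)+f\in\mathcal{A}^{-1}$; together with the fact that $f$ is a projection annihilating $a+b$, Theorem 1.1(4) gives $a+b\in\mathcal{A}^{\tiny\textcircled{\#}}$.

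The delicate point is that the factorization works only in the order $uv$: forming $vu$ would produce terms in $ba$ and in $bb^{\tiny\textcircled{\#}}a$-on-the-right and would require $ba=0$, which is not among the hypotheses. Thus the asymmetry of the assumptions (we are given $ab=0$, not $ba=0$) must be honoured by keeping the factor built from $a$ on the left throughout. The second place needing care is the passage from the $*$-condition $a^*b=0$ to the range orthogonality $aa^{\tiny\textcircled{\#}}b=0$, $bb^{\tiny\textcircled{\#}}a=0$; this is exactly where the involution and the self-adjointness of $aa^{\tiny\textcircled{\#}}$, $bb^{\tiny\textcircled{\#}}$ enter, and it is what makes the statement genuinely a result about the core inverse rather than the group inverse. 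The remaining computations are routine applications of the identities in Theorem 1.1.
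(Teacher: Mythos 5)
Your proof is correct. Note that the paper does not actually prove this lemma --- it is quoted verbatim from \cite[Theorem 4.3]{XCZ} with no argument supplied --- so there is nothing internal to compare against; your proposal is a genuine self-contained derivation. Every step checks out: $aa^{\tiny\textcircled{\#}}$ and $bb^{\tiny\textcircled{\#}}$ are projections by $(aa^{\tiny\textcircled{\#}})^*=aa^{\tiny\textcircled{\#}}$ and $axa=a$ from Theorem 1.1(2); the relations $aa^{\tiny\textcircled{\#}}b=(a^{\tiny\textcircled{\#}})^*a^*b=0$ and $bb^{\tiny\textcircled{\#}}a=(b^{\tiny\textcircled{\#}})^*b^*a=0$ follow from $a^*b=0$ and its adjoint $b^*a=0$, giving mutual orthogonality of the two projections, so $f=1-aa^{\tiny\textcircled{\#}}-bb^{\tiny\textcircled{\#}}$ is a projection killing $a+b$ on the left; the invertibility of $u=a+1-aa^{\tiny\textcircled{\#}}$ and $v=b+1-bb^{\tiny\textcircled{\#}}$ is exactly the content of Theorem 1.1(4) (and your corner computation $gag\cdot a^{\tiny\textcircled{\#}}=g=a^{\tiny\textcircled{\#}}\cdot gag$ with $a^{\tiny\textcircled{\#}}=ga^{\tiny\textcircled{\#}}g$ verifies it directly); and the expansion $uv=ab+a-abb^{\tiny\textcircled{\#}}+b-aa^{\tiny\textcircled{\#}}b+1-aa^{\tiny\textcircled{\#}}-bb^{\tiny\textcircled{\#}}+aa^{\tiny\textcircled{\#}}bb^{\tiny\textcircled{\#}}=(a+b)+f$ uses only the four vanishing products you isolated. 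Your remark about the order of the factors is also apt: $vu$ would need $ba=0$, which is not assumed. The one thing your route does not deliver, in contrast to the source \cite{XCZ}, is an explicit formula for $(a+b)^{\tiny\textcircled{\#}}$; if you want one, it can be extracted from your own construction as $(a+b)^{\tiny\textcircled{\#}}=\bigl((a+b)+f\bigr)^{-1}(1-f)=v^{-1}u^{-1}\bigl(aa^{\tiny\textcircled{\#}}+bb^{\tiny\textcircled{\#}}\bigr)$, but for the existence statement as posed your argument is complete.
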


\begin{lem} (see~\cite[Theorem 3.5]{CZP})) Let $a,b\in \mathcal{A}^{\tiny\textcircled{\#}}$. If $ab=ba$ and $a^*b=ba^*$, then $ab\in \mathcal{A}^{\tiny\textcircled{\#}}$ and $(ab)^{\tiny\textcircled{\#}}=a^{\tiny\textcircled{\#}}b^{\tiny\textcircled{\#}}$.\end{lem}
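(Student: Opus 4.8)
The plan is to verify directly that $z:=a^{\tiny\textcircled{\#}}b^{\tiny\textcircled{\#}}$ is the core inverse of $ab$, that is, that it satisfies the three defining identities $z(ab)^2=ab$, $(ab)z^2=z$ and $\big((ab)z\big)^*=(ab)z$; by uniqueness this yields both $ab\in\mathcal{A}^{\tiny\textcircled{\#}}$ and the formula at once. Throughout I write $x=a^{\tiny\textcircled{\#}}$, $y=b^{\tiny\textcircled{\#}}$ and introduce the two self-adjoint idempotents $p=ax$ and $q=by$; these are projections (using $(ax)^*=ax$ together with $xax=x$ from Theorem 1.1), so in particular each is core invertible and equal to its own core inverse. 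I will also use the elementary identities $xa^2=a$, $ax^2=x$, $yb^2=b$, $by^2=y$ coming straight from the definition.

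The real work, and the only genuine obstacle, is to extract enough commutativity from the two hypotheses $ab=ba$ and $a^*b=ba^*$. Lemma 3.1 applied to $(a,b)$ gives $xb=bx$, and applied to $(b,a)$ (whose hypothesis $b^*a=ab^*$ is the adjoint of $a^*b=ba^*$) gives $ay=ya$. These yield $pb=bp$ and $qa=aq$ immediately, and taking adjoints (since $p^*=p$, $q^*=q$) also $pb^*=b^*p$ and $qa^*=a^*q$. The decisive trick is then to feed the projections themselves back into Lemma 3.1: the pair $(b,p)$ satisfies $bp=pb$ and $b^*p=pb^*$, so Lemma 3.1 returns $yp=py$, and symmetrically the pair $(a,q)$ returns $xq=qx$. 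Combining $pb=bp$ with $py=yp$ then gives $pq=qp$. I stress that this route deliberately avoids having to prove $xy=yx$, and avoids assuming that $a^*$ or $b^*$ is core invertible (which in general it need not be).

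With these relations in hand the three identities collapse. Since $ab=ba$ we have $(ab)^2=a^2b^2$, whence $z(ab)^2=xy\,a^2b^2=x(ya^2)b^2=x a^2 y b^2=(xa^2)(yb^2)=ab$. Next, $xb=bx$ gives $(ab)z=a(bx)y=(ax)(by)=pq$, and as $p,q$ are commuting self-adjoint elements $pq$ is self-adjoint, so $\big((ab)z\big)^*=(ab)z$. Finally, for the middle identity I use $(ab)z=pq$ together with $qx=xq$, $px=ax^2=x$ and $qy=by^2=y$ to compute $(ab)z^2=pq\,xy=p(qx)y=pxqy=(px)(qy)=xy=z$. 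Hence $z=a^{\tiny\textcircled{\#}}b^{\tiny\textcircled{\#}}$ is the core inverse of $ab$, proving simultaneously that $ab\in\mathcal{A}^{\tiny\textcircled{\#}}$ and that $(ab)^{\tiny\textcircled{\#}}=a^{\tiny\textcircled{\#}}b^{\tiny\textcircled{\#}}$.
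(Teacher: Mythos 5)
Your verification is correct; I checked each step. The commutation relations you need all hold: Lemma~3.1 applied to $(a,b)$ and to $(b,a)$ (the latter legitimate because $b^*a=ab^*$ is indeed the adjoint of $a^*b=ba^*$) gives $xb=bx$ and $ay=ya$; from these $pb=bp$ and $qa=aq$ follow, and your bootstrapping step --- feeding the self-adjoint idempotents $p=ax$ and $q=by$ (which are their own core inverses) back into Lemma~3.1 to obtain $yp=py$ and $xq=qx$ --- is the genuinely clever move that makes the three defining identities $z(ab)^2=ab$, $(ab)z^2=z$ and $\bigl((ab)z\bigr)^*=(ab)z$ collapse by direct computation. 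The comparison with the paper is simple: the paper offers no proof of this lemma at all, citing it wholesale from Chen--Zhu--Patricio--Zhang \cite[Theorem 3.5]{CZP}, just as it cites Lemma~3.1 from \cite[Corollary 3.4]{CZP}. What your argument buys is a demonstration that Lemma~3.3 is not an independent import: it follows from Lemma~3.1 together with the defining equations of the core inverse by elementary manipulation, so the paper's reliance on the external reference could be reduced to the single commutation result. The only dependency you retain is Lemma~3.1 itself, which you use three times (twice on the original pair, once on each projection pair); if one wanted a fully self-contained treatment that lemma would still have to be proved, but your reduction is valid and arguably more informative than the bare citation.
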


We come now to the demonstration for which this section has been developed.

\begin{thm} Let $a,b\in \mathcal{A}^{\tiny\textcircled{\#}}$. If $ab=ba$ and $a^*b=ba^*$, then the following are equivalent:\end{thm}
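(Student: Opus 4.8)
The plan is to use the commutativity granted by Lemma 3.1 to replace the (generally non-Hermitian) idempotent $aa^{\#}$ by the Hermitian range projection $\bar p:=aa^{\tiny\textcircled{\#}}$, and then to run the block-triangular argument of Lemma 2.3 exactly as in the proof of Theorem 2.4. First I would record the standard core-inverse identities $\bar p^{*}=\bar p$, $\bar p^{2}=\bar p$, $\bar p a=a$, $a^{\tiny\textcircled{\#}}\bar p=\bar p a^{\tiny\textcircled{\#}}=a^{\tiny\textcircled{\#}}$, $a^{\tiny\textcircled{\#}}=a^{\#}aa^{\tiny\textcircled{\#}}$ and $a^{\tiny\textcircled{\#}}a=aa^{\#}$, so that $a^{\pi}=1-a^{\tiny\textcircled{\#}}a$. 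Using $ab=ba$, $a^{*}b=ba^{*}$ together with Lemma 3.1 and its adjoint form $a^{\tiny\textcircled{\#}}b^{*}=b^{*}a^{\tiny\textcircled{\#}}$ (valid because $ab^{*}=b^{*}a$ and $a^{*}b^{*}=b^{*}a^{*}$), I would then verify $\bar p b=b\bar p$ and $\bar p b^{*}=b^{*}\bar p$; thus $\bar p$ is a projection commuting with $b,b^{*}$, so the Peirce decomposition of $a+b$ along $\bar p$ is compatible with the involution.

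The second step is to recast condition $(2)$. The identity $(a+b)a^{\tiny\textcircled{\#}}=aa^{\tiny\textcircled{\#}}+ba^{\tiny\textcircled{\#}}=\bar p+a^{\tiny\textcircled{\#}}b$ shows that $(a+b)a^{\tiny\textcircled{\#}}\in\bar p\mathcal A\bar p$, and hence that $1+a^{\tiny\textcircled{\#}}b=\bar p^{\pi}+(a+b)a^{\tiny\textcircled{\#}}$ is block diagonal with respect to the Hermitian projection $\bar p$; so $1+a^{\tiny\textcircled{\#}}b\in\mathcal A^{\tiny\textcircled{\#}}$ iff $(a+b)a^{\tiny\textcircled{\#}}\in\mathcal A^{\tiny\textcircled{\#}}$. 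Next I note that $\tilde a:=a^{2}a^{\tiny\textcircled{\#}}$ is invertible in the corner $\bar p\mathcal A\bar p$ with inverse $a^{\tiny\textcircled{\#}}$ (since $a^{\tiny\textcircled{\#}}\tilde a=\tilde a a^{\tiny\textcircled{\#}}=\bar p$), that $\tilde a$ commutes and $*$-commutes with $(a+b)a^{\tiny\textcircled{\#}}$, and that $\tilde a\,(a+b)a^{\tiny\textcircled{\#}}=(a+b)\bar p$. Two applications of Lemma 3.3 then show that $(2)$ is equivalent to $(a+b)\bar p\in\mathcal A^{\tiny\textcircled{\#}}$.

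Third, I would apply Lemma 2.3 with the projection $q:=\bar p^{\pi}$. Because $\bar p a=a$ and $\bar p$ commutes with $b$, the Peirce form of $a+b$ along $q$ is lower triangular, $q(a+b)q^{\pi}=\bar p^{\pi}(a+b)\bar p=0$, with diagonal corners $q(a+b)q=b\bar p^{\pi}$ and $q^{\pi}(a+b)q^{\pi}=(a+b)\bar p$ and sub-diagonal corner $q^{\pi}(a+b)q=a\bar p^{\pi}$. Here $b\bar p^{\pi}\in\mathcal A^{\tiny\textcircled{\#}}$ by Lemma 3.3 (product of $b$ with the commuting, $*$-commuting projection $\bar p^{\pi}$), while $(a+b)\bar p\in\mathcal A^{\tiny\textcircled{\#}}$ is precisely $(2)$ by Step 2. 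Granting the compatibility hypothesis $((a+b)\bar p)^{\pi}\,\bar p\,(a+b)\bar p^{\pi}=0$ of Lemma 2.3, that lemma yields $a+b\in\mathcal A^{\tiny\textcircled{\#}}$ and $\bar p^{\pi}(a+b)^{\tiny\textcircled{\#}}\bar p=0$; the reverse implication is read off from the block formula for $(a+b)^{\tiny\textcircled{\#}}$ provided by the same lemma, reversing Steps 2--3.

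The hard part is that $a$ is not assumed EP, so $aa^{\#}\neq aa^{\tiny\textcircled{\#}}$ and the sub-diagonal corner $a\bar p^{\pi}=a-a^{2}a^{\tiny\textcircled{\#}}$ does not vanish; controlling this term is the crux of the argument, and the two commuting hypotheses are exactly what make it harmless. Concretely, I expect the main work to be (i) verifying the Lemma 2.3 compatibility identity $((a+b)\bar p)^{\pi}\,a\bar p^{\pi}=0$ from $ab=ba$ and $a^{*}b=ba^{*}$ alone --- the analogue, in this non-EP setting, of the identity $b^{\pi}a^{\pi}b=0$ used to clear the extra term in Corollary 2.5, which I anticipate obtaining by expanding $((a+b)\bar p)^{\#}$ and applying the four commutation relations repeatedly --- and (ii) reconciling the side condition $\bar p^{\pi}(a+b)^{\tiny\textcircled{\#}}\bar p=0$ produced by Lemma 2.3 with the stated form $a^{\tiny\textcircled{\#}}(a+b)^{\tiny\textcircled{\#}}a^{\pi}=0$, which reduces to comparing the range projection $aa^{\tiny\textcircled{\#}}$ with the group idempotent $aa^{\#}$ via Lemma 2.1 and the identity $a^{\tiny\textcircled{\#}}=a^{\#}aa^{\tiny\textcircled{\#}}$.
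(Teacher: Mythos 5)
Your overall architecture --- Peirce decomposition along the Hermitian projection $\bar p=aa^{\tiny\textcircled{\#}}$, the observation that $\bar p$ commutes with $b$ and $b^*$, the reduction of the condition on $1+a^{\tiny\textcircled{\#}}b$ to core invertibility of $(a+b)aa^{\tiny\textcircled{\#}}$, Lemma 3.3 for the corner $b\bar p^{\pi}$, and a triangular-block lemma to reassemble $a+b$ --- is essentially the paper's own proof of the direction $(2)\Rightarrow(1)$ (the paper writes the decomposition upper-triangularly with respect to $\bar p$ and constructs the $(1,3)$-inverse by hand rather than invoking Lemma 2.3, but that is cosmetic). The genuine gap is at the step you yourself flag as the crux, item (i): the compatibility identity $((a+b)\bar p)^{\pi}\,a\bar p^{\pi}=0$ is \emph{not} derivable from $ab=ba$ and $a^{*}b=ba^{*}$, even together with core invertibility of $1+a^{\tiny\textcircled{\#}}b$. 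It is an independent hypothesis, and the theorem carries it explicitly: condition $(2)$ reads ``$1+a^{\tiny\textcircled{\#}}b\in\mathcal{A}^{\tiny\textcircled{\#}}$ \emph{and} $(1+a^{\tiny\textcircled{\#}}b)^{\pi}a(1-aa^{\tiny\textcircled{\#}})=0$'', and the latter clause is exactly your compatibility identity, since $[(a+b)aa^{\tiny\textcircled{\#}}]^{\pi}a(1-aa^{\tiny\textcircled{\#}})=(1+a^{\tiny\textcircled{\#}}b)^{\pi}a(1-aa^{\tiny\textcircled{\#}})$ by $aa^{\tiny\textcircled{\#}}a=a$. Symmetrically, condition $(1)$ carries the clause $a^{\pi}(a+b)^{\tiny\textcircled{\#}}a=0$. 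You appear to have reconstructed the two conditions from the looser statement in the introduction, which omits these clauses.

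Concretely, take $\mathcal{A}=\mathbb{C}^{2\times 2}$, $a=\left(\begin{array}{cc}1&1\\0&0\end{array}\right)$ and $b=-I$. Then $a^{\tiny\textcircled{\#}}=\left(\begin{array}{cc}1&0\\0&0\end{array}\right)$, all of your hypotheses hold ($b$ is scalar, so the commutation relations are automatic), and $1+a^{\tiny\textcircled{\#}}b=\left(\begin{array}{cc}0&0\\0&1\end{array}\right)$ is core invertible; yet
$$(1+a^{\tiny\textcircled{\#}}b)^{\pi}a(1-aa^{\tiny\textcircled{\#}})=\left(\begin{array}{cc}0&1\\0&0\end{array}\right)\neq 0,$$
and correspondingly $a^{\pi}(a+b)^{\tiny\textcircled{\#}}a\neq 0$, where $(a+b)^{\tiny\textcircled{\#}}=\frac{1}{2}\left(\begin{array}{cc}-1&1\\1&-1\end{array}\right)$. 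So no amount of expanding $((a+b)\bar p)^{\#}$ and applying the four commutation relations will yield the identity; it must be assumed on one side of the equivalence and transported to the other. Once you restore the two side clauses to your conditions $(1)$ and $(2)$ and treat the compatibility identity as the hypothesis it is, the remainder of your plan does go through along the paper's lines (your use of Lemma 3.3 to pass between $(a+b)a^{\tiny\textcircled{\#}}$ and $(a+b)\bar p$ also needs a word of care, since $a^{2}a^{\tiny\textcircled{\#}}$ is invertible only in the corner $\bar p\mathcal{A}\bar p$, not in $\mathcal{A}$).
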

\begin{enumerate}
\item [(1)] $a+b\in \mathcal{A}^{\tiny\textcircled{\#}}$ and $a^{\pi}(a+b)^{\tiny\textcircled{\#}}a=0$.
\vspace{-.5mm}
\item [(2)] $1+a^{\tiny\textcircled{\#}}b\in \mathcal{A}^{\tiny\textcircled{\#}}$ and $(1+a^{\tiny\textcircled{\#}}b)^{\pi}a(1-aa^{\tiny\textcircled{\#}})=0$.
\end{enumerate}
\begin{proof} $(1)\Rightarrow (2)$ We observe that $$\begin{array}{rll}
1+a^{\tiny\textcircled{\#}}b&=&[1-aa^{\tiny\textcircled{\#}}]+[aa^{\tiny\textcircled{\#}}+a^{\tiny\textcircled{\#}}b]\\
&=&[1-aa^{\tiny\textcircled{\#}}]+[aa^{\tiny\textcircled{\#}}+ba^{\tiny\textcircled{\#}}]\\
&=&[1-aa^{\tiny\textcircled{\#}}]+ [a+b]a^{\tiny\textcircled{\#}}
\end{array}$$ Since $ab=ba$ and $a^*b=ba^*$, it follows by Lemma 3.1 that $a^{\tiny\textcircled{\#}}b=ba^{\tiny\textcircled{\#}}$. As $a^{\pi}(a+b)^{\tiny\textcircled{\#}}a=0$, by using Lemma 2.2, $p^{\pi}(a+b)^{\tiny\textcircled{\#}}p=p^{\pi}(a+b)^{\tiny\textcircled{\#}}aa^{\#}a^{(1,3)}=0$.
Obviously, $p^{\pi}(a+b)p=0$.
Let $p=aa^{\tiny\textcircled{\#}}$. Then $$a+b=\left(
\begin{array}{cc}
p(a+b)p&p(a+b)p^{\pi}\\
0&p^{\pi}(a+b)p^{\pi}
\end{array}
\right)_p, (a+b)^{\tiny\textcircled{\#}}=\left(
\begin{array}{cc}
\alpha &\beta\\
0&\gamma
\end{array}
\right)_p.$$
As in the proof of Theorem 2.4, $[p(a+b)p]^{\tiny\textcircled{\#}}=\alpha $. That is, $(a+b)aa^{\tiny\textcircled{\#}}\in \mathcal{A}^{\tiny\textcircled{\#}}$.

We easily check that $[(a+b)aa^{\tiny\textcircled{\#}}]a^{\tiny\textcircled{\#}}=a^{\tiny\textcircled{\#}}[(a+b)aa^{\tiny\textcircled{\#}}]$. In view of Lemma 3.1,
$(a+b)a^{\tiny\textcircled{\#}}=[(a+b)aa^{\tiny\textcircled{\#}}]a^{\tiny\textcircled{\#}}\in \mathcal{A}^{\#}$. Set $y=[(a+b)aa^{\tiny\textcircled{\#}}]^{\tiny\textcircled{\#}}.$ Then
$$(a+b)aa^{\tiny\textcircled{\#}}=(a+b)aa^{\tiny\textcircled{\#}}y(a+b)aa^{\tiny\textcircled{\#}}, [(a+b)aa^{\tiny\textcircled{\#}}y]^*=(a+b)aa^{\tiny\textcircled{\#}}y.$$
We verify that
$$\begin{array}{rl}
&[(a+b)a^{\tiny\textcircled{\#}}](a^2a^{\tiny\textcircled{\#}}y)[(a+b)a^{\tiny\textcircled{\#}}]\\
=&[(a+b)aa^{\tiny\textcircled{\#}]}y[(a+b)aa^{\tiny\textcircled{\#}}]a^{\tiny\textcircled{\#}}\\
=&[(a+b)aa^{\tiny\textcircled{\#}]}a^{\tiny\textcircled{\#}}\\
=&(a+b)a^{\tiny\textcircled{\#}},\\
&[(a+b)a^{\tiny\textcircled{\#}}(a^2a^{\tiny\textcircled{\#}}y)]^*\\
=&[(a+b)aa^{\tiny\textcircled{\#}}y]^*\\
=&(a+b)aa^{\tiny\textcircled{\#}}y\\
=&(a+b)a^{\tiny\textcircled{\#}}(a^2a^{\tiny\textcircled{\#}}y).
\end{array}$$ Therefore $(a+b)a^{\tiny\textcircled{\#}}$ has $(1,3)$-inverse $a^2a^{\tiny\textcircled{\#}}y$.
By virtue of~\cite[Theorem 2.6]{XCZ}, $(a+b)a^{\tiny\textcircled{\#}}\in \mathcal{A}^{\tiny\textcircled{\#}}$. Obviously, we have $$[1-aa^{\tiny\textcircled{\#}}][a+b]a^{\tiny\textcircled{\#}}=[1-aa^{\tiny\textcircled{\#}}]^*[a+b]a^{\tiny\textcircled{\#}}=0.$$
According to Lemma 3.2, $1+a^{\tiny\textcircled{\#}}b\in \mathcal{A}^{\tiny\textcircled{\#}}.$

Since $(a+b)(a+b)^{\tiny\textcircled{\#}}(a+b)=a+b$, we have $$p(a+b)p^{\pi}=p(a+b)p\alpha p(a+b)p^{\pi}+[p(a+b)p\beta +p(a+b)p^{\pi}\gamma]p^{\pi}(a+b)p^{\pi}.$$
Since $[(a+b)(a+b)^{\tiny\textcircled{\#}}]^*=(a+b)(a+b)^{\tiny\textcircled{\#}}$, we have $$p(a+b)p\beta +p(a+b)p^{\pi}\gamma.$$ Then
$$p(a+b)p^{\pi}=p(a+b)p\alpha p(a+b)p^{\pi}.$$ In view of Lemma 2.2, we derive
$[p(a+b)p]^{\pi}p(a+b)p^{\pi}=0$, and so
$$(1+a^{\tiny\textcircled{\#}}b)^{\pi}a^2a^{\tiny\textcircled{\#}}a(1-aa^{\tiny\textcircled{\#}}]=0.$$
Therefore $(1+a^{\tiny\textcircled{\#}}b)^{\pi}a(1-aa^{\tiny\textcircled{\#}})=0$.

$(2)\Rightarrow (1)$ Let $z=(1+a^{\tiny\textcircled{\#}}b)^{\tiny\textcircled{\#}}$. Then we verify that
$$\begin{array}{rl}
&[(1+a^{\tiny\textcircled{\#}}b)a][a^{\tiny\textcircled{\#}}z][(1+a^{\tiny\textcircled{\#}}b)a]\\
=&aa^{\tiny\textcircled{\#}}[(1+a^{\tiny\textcircled{\#}}b)z(1+a^{\tiny\textcircled{\#}}b)]a\\
=&aa^{\tiny\textcircled{\#}}[(1+a^{\tiny\textcircled{\#}}b)a\\
=&(1+a^{\tiny\textcircled{\#}}b)a.
\end{array}$$
Since $(1+a^{\tiny\textcircled{\#}}b)aa^{\tiny\textcircled{\#}}=aa^{\tiny\textcircled{\#}}(1+a^{\tiny\textcircled{\#}}b)$ and $(aa^{\tiny\textcircled{\#}})^*=aa^{\tiny\textcircled{\#}}$, we have $$aa^{\tiny\textcircled{\#}}(1+a^{\tiny\textcircled{\#}}b)^*=(1+a^{\tiny\textcircled{\#}}b)^*aa^{\tiny\textcircled{\#}}.$$
In light of Lemma 3.1, we get $aa^{\tiny\textcircled{\#}}z=zaa^{\tiny\textcircled{\#}}.$

Step 1. It is easy to verify that $$\begin{array}{rll}
(a^2a^{\tiny\textcircled{\#}})a^{\tiny\textcircled{\#}}&=&aa^{\tiny\textcircled{\#}}=a^{\tiny\textcircled{\#}}(a^2a^{\tiny\textcircled{\#}}),\\
a^{\tiny\textcircled{\#}}(a^2a^{\tiny\textcircled{\#}})a^{\tiny\textcircled{\#}}&=&a^{\tiny\textcircled{\#}}(aa^{\tiny\textcircled{\#}})=a^{\tiny\textcircled{\#}},\\
(a^2a^{\tiny\textcircled{\#}})a^{\tiny\textcircled{\#}}(a^2a^{\tiny\textcircled{\#}})&=&(aa^{\tiny\textcircled{\#}})(a^2a^{\tiny\textcircled{\#}})
=a^2a^{\tiny\textcircled{\#}}
\end{array}$$
Thus $a^2a^{\tiny\textcircled{\#}}\in \mathcal{A}^{\#}$. In view of Theorem 1.1, $1+a^{\tiny\textcircled{\#}}b\in \mathcal{A}^{\#}$.
Since $(1+a^{\tiny\textcircled{\#}}b)a^2a^{\tiny\textcircled{\#}}=(a+b)aa^{\tiny\textcircled{\#}}=aa^{\tiny\textcircled{\#}}(a+b)=a^2a^{\tiny\textcircled{\#}}(1+a^{\tiny\textcircled{\#}}b)$, it follows by ~\cite[Lemma 2.1]{XS} that
$(1+a^{\tiny\textcircled{\#}}b)a^2a^{\tiny\textcircled{\#}}\in \mathcal{A}^{\#}$ and $$
\begin{array}{rl}
&[(a+b)aa^{\tiny\textcircled{\#}}]^{\pi}\\
=&[(1+a^{\tiny\textcircled{\#}}b)a^2a^{\tiny\textcircled{\#}}]^{\pi}\\
=&1-(1+a^{\tiny\textcircled{\#}}b)a^2a^{\tiny\textcircled{\#}}(1+a^{\tiny\textcircled{\#}}b)^{\#}a^{\tiny\textcircled{\#}}\\
=&1-(1+a^{\tiny\textcircled{\#}}b)(1+a^{\tiny\textcircled{\#}}b)^{\#}aa^{\tiny\textcircled{\#}}.
\end{array}$$

Step 2. We check that
$$\begin{array}{rl}
&[(1+a^{\tiny\textcircled{\#}}b)a^2a^{\tiny\textcircled{\#}}][a^{\tiny\textcircled{\#}}x]\\
=&[(1+a^{\tiny\textcircled{\#}}b)x][aa^{\tiny\textcircled{\#}}]
\end{array}$$ Hence,
$$\begin{array}{rl}
&[(1+a^{\tiny\textcircled{\#}}b)a^2a^{\tiny\textcircled{\#}}(a^{\tiny\textcircled{\#}}x)]^*\\
=&[aa^{\tiny\textcircled{\#}}]^*[(1+a^{\tiny\textcircled{\#}}b)x]^*\\
=&[aa^{\tiny\textcircled{\#}}][(1+a^{\tiny\textcircled{\#}}b)x]\\
=&[(1+a^{\tiny\textcircled{\#}}b)x][aa^{\tiny\textcircled{\#}}]\\
=&[(1+a^{\tiny\textcircled{\#}}b)a^2a^{\tiny\textcircled{\#}}][a^{\tiny\textcircled{\#}}x]
\end{array}$$
So $(1+a^{\tiny\textcircled{\#}}b)a^2a^{\tiny\textcircled{\#}}$ has a $(1,3)$ inverse $a^{\tiny\textcircled{\#}}x$.

Accordingly, $(a+b)aa^{\tiny\textcircled{\#}}=(1+a^{\tiny\textcircled{\#}}b)a^2a^{\tiny\textcircled{\#}}\in \mathcal{A}^{\tiny\textcircled{\#}}.$
Let $p=aa^{\tiny\textcircled{\#}}$. Then $p^{\pi}bp=(1-aa^{\tiny\textcircled{\#}})baa^{\tiny\textcircled{\#}}=(1-aa^{\tiny\textcircled{\#}})aba^{\tiny\textcircled{\#}}=0$. Similarly,
$pbp^{\pi}=0$. So we get $$a=\left(
\begin{array}{cc}
a_1&a_2\\
0&0
\end{array}
\right)_p, b=\left(
\begin{array}{cc}
b_1&0\\
0&b_4
\end{array}
\right)_p.$$
Hence $$a+b=\left(
\begin{array}{cc}
a_1+b_1&a_2\\
0&b_4
\end{array}
\right)_p.$$ Here $a_1+b_1=(a+b)aa^{\tiny\textcircled{\#}}, b_4=p^{\pi}(a+b)p^{\pi}=bp^{\pi}.$
Since $bp^{\pi}=p^{\pi}b, b^*p^{\pi}=(p^{\pi}b)^*=(bp^{\pi})^*=p^{\pi}b^*$. In light of Lemma 3.3, $b_4=bp^{\pi}\in \mathcal{A}^{\tiny\textcircled{\#}}$ and $b_4^{\tiny\textcircled{\#}}=b^{\tiny\textcircled{\#}}p^{\pi}$.

Let $$x=\left(
\begin{array}{cc}
(a_1+b_1)^{\tiny\textcircled{\#}}&-(a_1+b_1)^{\tiny\textcircled{\#}}a_2b_4^{\tiny\textcircled{\#}}\\
0&(b_4)^{\tiny\textcircled{\#}}
\end{array}
\right)_p.$$ Since $(1+a^{\tiny\textcircled{\#}}b)^{\pi}a(1-aa^{\tiny\textcircled{\#}})=0$, we verify that
$$\begin{array}{rl}
&a_2-(a_1+b_1)(a_1+b_1)^{\tiny\textcircled{\#}}a_2\\
=&[1-(a_1+b_1)(a_1+b_1)^{\tiny\textcircled{\#}}]aa^{\tiny\textcircled{\#}}a(1-aa^{\tiny\textcircled{\#}})\\
=&[1-(1+a^{\tiny\textcircled{\#}}b)(1+a^{\tiny\textcircled{\#}}b)^{\#}aa^{\tiny\textcircled{\#}}]aa^{\tiny\textcircled{\#}}a(1-aa^{\tiny\textcircled{\#}})\\
=&(1+a^{\tiny\textcircled{\#}}b)^{\pi}a(1-aa^{\tiny\textcircled{\#}})\\
=&0.
\end{array}$$ That is, $(a_1+b_1)^{\pi}a_2=0$. In view of ~\cite[Theorem 2.5]{XS}, $a+b\in \mathcal{A}^{\#}$ and
$$(a+b)^{\#}=\left(
\begin{array}{cc}
(a_1+b_1)^{\#}&*\\
0&(b_4)^{\#}
\end{array}
\right)_p.$$
Then we we have
$$\begin{array}{rl}
&(a+b)x\\
=&\left(
\begin{array}{cc}
a_1+b_1&a_2\\
0&b_4
\end{array}
\right)_p\left(
\begin{array}{cc}
(a_1+b_1)^{\tiny\textcircled{\#}}&-(a_1+b_1)^{\tiny\textcircled{\#}}a_2b_4^{\tiny\textcircled{\#}}\\
0&b_4^{\tiny\textcircled{\#}}
\end{array}
\right)_p\\
=&\left(
\begin{array}{cc}
(a_1+b_1)(a_1+b_1)^{\tiny\textcircled{\#}}&0\\
0&b_4b_4^{\tiny\textcircled{\#}}
\end{array}
\right)_p.
\end{array}$$ Hence $[(a+b)x]^*=(a+b)x$.
We further verify that $$\begin{array}{rl}
&(a+b)x(a+b)\\
=&\left(
\begin{array}{cc}
(a_1+b_1)(a_1+b_1)^{\tiny\textcircled{\#}}&0\\
0&b_4b_4^{\tiny\textcircled{\#}}
\end{array}
\right)_p\left(
\begin{array}{cc}
a_1+b_1&a_2\\
0&b_4
\end{array}
\right)_p\\
=&a+b.
\end{array}$$ Thus $a+b\in \mathcal{A}^{(1,3)}$. According to ~\cite[Theorem 2.6]{XCZ}, $a+b$ has core inverse.

Moreover, we have $$\begin{array}{rl}
&(a+b)^{\tiny\textcircled{\#}}\\
=&(a+b)^{\#}(a+b)x\\
=&\left(
\begin{array}{cc}
(a_1+b_1)^{\#}&*\\
0&(b_4)^{\#}
\end{array}
\right)_p\left(
\begin{array}{cc}
(a_1+b_1)(a_1+b_1)^{\tiny\textcircled{\#}}&0\\
0&b_4b_4^{\tiny\textcircled{\#}}
\end{array}
\right)_p\\
=&\left(
\begin{array}{cc}
*&*\\
0&*
\end{array}
\right)_p.
\end{array}$$ We infer that $p^{\pi}(a+b)^{\tiny\textcircled{\#}}a=p^{\pi}(a+b)^{\tiny\textcircled{\#}}pa=0$.
In light of Lemma 2.2, $a^{\pi}(a+b)^{\tiny\textcircled{\#}}a=0$, as asserted.\end{proof}

\begin{cor} Let $a\in \mathcal{A}^{EP},b\in \mathcal{A}^{\tiny\textcircled{\#}}$. If $ab=ba$ and $a^*b=ba^*$, then the following are equivalent:\end{cor}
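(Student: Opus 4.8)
The plan is to derive this corollary directly from Theorem 3.4, by checking that in the present EP setting the two auxiliary equalities occurring in that theorem are automatically satisfied. Once this is done the listed equivalence collapses to the bare statement ``$a+b\in\mathcal{A}^{\tiny\textcircled{\#}}$ if and only if $1+a^{\tiny\textcircled{\#}}b\in\mathcal{A}^{\tiny\textcircled{\#}}$''. So the first thing I would record are the standard consequences of $a\in\mathcal{A}^{EP}$: one has $a^{\tiny\textcircled{\#}}=a^{\#}$, the idempotent $a^{\pi}=1-aa^{\#}=1-aa^{\tiny\textcircled{\#}}$ is a projection (hence $(a^{\pi})^*=a^{\pi}$), and $a^{\pi}a=aa^{\pi}=0$. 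In particular $a^2a^{\tiny\textcircled{\#}}=a$, so that $a(1-aa^{\tiny\textcircled{\#}})=a-a^2a^{\tiny\textcircled{\#}}=0$.

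For the implication $(2)\Rightarrow(1)$ the argument is immediate. Assuming $1+a^{\tiny\textcircled{\#}}b\in\mathcal{A}^{\tiny\textcircled{\#}}$, the side condition required in Theorem 3.4(2), namely $(1+a^{\tiny\textcircled{\#}}b)^{\pi}a(1-aa^{\tiny\textcircled{\#}})=0$, holds trivially because the factor $a(1-aa^{\tiny\textcircled{\#}})$ already vanishes. Thus condition (2) of Theorem 3.4 is in force, and Theorem 3.4 yields $a+b\in\mathcal{A}^{\tiny\textcircled{\#}}$.

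For $(1)\Rightarrow(2)$ the real content is to verify the side condition $a^{\pi}(a+b)^{\tiny\textcircled{\#}}a=0$ of Theorem 3.4(1), and this is where the commutativity hypotheses enter. Since $ab=ba$ and $a$ is group invertible, the double-commutant behaviour of the group inverse gives $a^{\#}b=ba^{\#}$, so $a^{\pi}$ commutes with $b$ and (trivially) with $a$, whence $a^{\pi}(a+b)=(a+b)a^{\pi}$. Taking adjoints in $a^*b=ba^*$ gives $ab^*=b^*a$, so $a^{\pi}$ commutes with $b^*$ as well; combined with $(a^{\pi})^*=a^{\pi}$ and $a^{\pi}a^*=a^*a^{\pi}=0$, this produces $a^{\pi}(a+b)^*=(a+b)^*a^{\pi}$. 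I would then invoke Lemma 3.1 with the roles $a\mapsto a+b$ (core invertible by hypothesis) and $b\mapsto a^{\pi}$ (a projection, hence core invertible), obtaining $a^{\pi}(a+b)^{\tiny\textcircled{\#}}=(a+b)^{\tiny\textcircled{\#}}a^{\pi}$. Consequently $a^{\pi}(a+b)^{\tiny\textcircled{\#}}a=(a+b)^{\tiny\textcircled{\#}}a^{\pi}a=0$, condition (1) of Theorem 3.4 is met, and Theorem 3.4 delivers $1+a^{\tiny\textcircled{\#}}b\in\mathcal{A}^{\tiny\textcircled{\#}}$.

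I expect the only delicate point to be the commutation of $a^{\pi}$ with $(a+b)^{\tiny\textcircled{\#}}$, that is, passing from commutation with $a+b$ and with $(a+b)^*$ to commutation with the core inverse. This is precisely what Lemma 3.1 furnishes, so the main obstacle reduces to verifying its two hypotheses, which in turn rests on the double-commutant property of the group inverse together with the self-adjointness $(a^{\pi})^*=a^{\pi}$ guaranteed by the EP assumption. Everything else is routine substitution into Theorem 3.4.
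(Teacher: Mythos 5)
Your proposal is correct and follows essentially the same route as the paper: both reduce to Theorem 3.4 by observing that $a(1-aa^{\tiny\textcircled{\#}})=a-a^2a^{\#}=0$ makes the side condition in (2) vacuous, and by using the self-adjointness of $a^{\pi}$ together with Lemma 3.1 to get $a^{\pi}(a+b)^{\tiny\textcircled{\#}}=(a+b)^{\tiny\textcircled{\#}}a^{\pi}$ and hence $a^{\pi}(a+b)^{\tiny\textcircled{\#}}a=0$. Your write-up is merely more explicit than the paper's about the double-commutant step $a^{\#}b=ba^{\#}$ and the verification of the hypotheses of Lemma 3.1.
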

\begin{enumerate}
\item [(1)] $a+b\in \mathcal{A}^{\tiny\textcircled{\#}}$.
\vspace{-.5mm}
\item [(2)] $1+a^{\#}b\in \mathcal{A}^{\tiny\textcircled{\#}}$.
\end{enumerate}
\begin{proof} Since $a\in \mathcal{A}^{EP}$, it follows by \cite[Theorem 3.1]{RD} that $a^{\#}=a^{\tiny\textcircled{\#}}$. Then $(1+a^{\tiny\textcircled{\#}}b)^{\pi}a(1-aa^{\tiny\textcircled{\#}})=0$.
Since $a^{\pi}(a+b)=(a+b)a^{\pi}$ and $(a^{\pi})^*=a^{\pi}$, we have $a^{\pi}(a+b)^*=(a+b)^*a^{\pi}$. In view of Lemma 3.1,
$a^{\pi}(a+b)^{\tiny\textcircled{\#}}=(a+b)^{\tiny\textcircled{\#}}a^{\pi}$. Thus $a^{\pi}(a+b)^{\tiny\textcircled{\#}}a=(a+b)^{\tiny\textcircled{\#}}a^{\pi}a=0$.
This completes the proof by Theorem 3.4.\end{proof}

\section{applications}

Let $M=\left(
  \begin{array}{cc}
    A&B\\
    C&D
  \end{array}
\right)\in {\Bbb C}^{2n\times 2n},$ where $A,B,C,D\in {\Bbb C}^{n\times n}.$ The aim of this section is to present the core invertibility of the block complex matrix $M$ by using our preceding results.

\begin{lem} Let $B,C$ have core inverses. If $B(CB)^{\pi}=0$ and $C(BC)^{\pi}=0$, then $\left(
  \begin{array}{cc}
    0 & B \\
    C & 0
  \end{array}
\right)$ has core inverse. In this case,
$$Q^{\tiny\textcircled{\#}}=\left(
  \begin{array}{cc}
  0& (BC)^{\#}BCC^{\tiny\textcircled{\#}} \\
  (CB)^{\#}CBB^{\tiny\textcircled{\#}}&0
  \end{array}
\right).$$\end{lem}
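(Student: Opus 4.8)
The plan is to verify directly that the displayed matrix, call it $X$, is the core inverse of $Q = \left(\begin{smallmatrix} 0 & B \\ C & 0 \end{smallmatrix}\right)$ by checking the three defining identities $XQ^2 = Q$, $QX^2 = X$ and $(QX)^* = QX$; since the core inverse is unique (as recorded in the Introduction), this simultaneously gives existence and confirms the formula. Write $P = (BC)^{\#}BCC^{\tiny\textcircled{\#}}$ and $R = (CB)^{\#}CBB^{\tiny\textcircled{\#}}$, so that $X = \left(\begin{smallmatrix} 0 & P \\ R & 0 \end{smallmatrix}\right)$. Observe that the hypotheses tacitly require $BC, CB \in \mathcal{A}^{\#}$ (so that $(BC)^{\pi}, (CB)^{\pi}$ are defined), and that $B(CB)^{\pi}=0$, $C(BC)^{\pi}=0$ rewrite as the working identities $B = BCB(CB)^{\#}$ and $C = CBC(BC)^{\#}$.

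Throughout I would lean on the core-inverse relations $BB^{\tiny\textcircled{\#}}B = B$, $CC^{\tiny\textcircled{\#}}C = C$ (with $BB^{\tiny\textcircled{\#}}, CC^{\tiny\textcircled{\#}}$ Hermitian), the commuting relation $(BC)^{\#}BC = BC(BC)^{\#}$, and the identity $(BC)^{\#} = BC((BC)^{\#})^2$, together with their $CB$-analogues. A direct multiplication gives $QX = \mathrm{diag}(BR, CP)$; using $B(CB)^{\#}CB = B\cdot CB(CB)^{\#} = B$ one obtains $BR = BB^{\tiny\textcircled{\#}}$, and symmetrically $CP = CC^{\tiny\textcircled{\#}}$. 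As $QX$ is then block-diagonal with Hermitian blocks, $(QX)^* = QX$, which settles the third identity. For $XQ^2 = Q$, since $Q^2 = \mathrm{diag}(BC, CB)$ the nonzero blocks of $XQ^2$ are $P(CB)$ and $R(BC)$; collapsing $P(CB) = (BC)^{\#}B(CC^{\tiny\textcircled{\#}}C)B = BC(BC)^{\#}B$ reduces matters to the absorption $BC(BC)^{\#}B = B$, and symmetrically for $R(BC) = C$. For $QX^2 = X$, with $X^2 = \mathrm{diag}(PR, RP)$ the nonzero blocks are $B(RP)$ and $C(PR)$; here $BRP = BB^{\tiny\textcircled{\#}}P$, and the absorption $BB^{\tiny\textcircled{\#}}(BC)^{\#} = BB^{\tiny\textcircled{\#}}BC((BC)^{\#})^2 = (BC)^{\#}$ yields $BRP = P$, and symmetrically $CPR = R$.

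The crux is the pair of absorption identities $BC(BC)^{\#}B = B$ and $CB(CB)^{\#}C = C$, which is exactly where the hypotheses are consumed. I would prove the first by substituting $B = BCB(CB)^{\#}$ into the right-hand factor, then using the idempotence $BC(BC)^{\#}BC = BC$ to cancel, returning to $BCB(CB)^{\#} = B$; the second is the mirror computation. This Cline-type rewriting is the only nonroutine point, and it genuinely uses the spectral-idempotent conditions rather than core-invertibility alone. Everything else is a mechanical collapse via $BB^{\tiny\textcircled{\#}}B = B$, $CC^{\tiny\textcircled{\#}}C = C$ and the commuting group-inverse relations.
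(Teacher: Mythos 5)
Your proof is correct, but it takes a genuinely different route from the paper's. You verify the three defining equations $XQ^2=Q$, $QX^2=X$, $(QX)^*=QX$ directly for the candidate $X$ and appeal to uniqueness of the core inverse; the paper instead first checks that $\left(\begin{smallmatrix}0&B(CB)^{\#}\\ C(BC)^{\#}&0\end{smallmatrix}\right)$ is the group inverse of $Q$, then exhibits the $(1,3)$-inverse $\left(\begin{smallmatrix}0&C^{\tiny\textcircled{\#}}\\ B^{\tiny\textcircled{\#}}&0\end{smallmatrix}\right)$, and concludes via the characterization that group invertibility together with a $(1,3)$-inverse gives core invertibility (\cite[Lemma 2.1]{XS}), recovering the displayed formula from $Q^{\tiny\textcircled{\#}}=Q^{\#}QQ^{(1,3)}$. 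Both arguments ultimately consume the hypotheses through the same absorption identities $B(CB)^{\#}CB=B$ and $BC(BC)^{\#}B=B$ (your ``crux''), together with the Cline-type commutation $B(CB)^{\#}=(BC)^{\#}B$; your derivation of the second identity from the first by substituting $B=BCB(CB)^{\#}$ is sound, and all the block computations check out. What your version buys is self-containedness: no external characterization or representation formula is needed beyond the uniqueness already recorded in the Introduction. What the paper's version buys is the explicit group inverse $Q^{\#}$ as a by-product and consistency with the group-plus-$(1,3)$ framework used throughout the paper. One minor difference of reading: the paper takes $(CB)^{\pi}$ to be the Drazin spectral idempotent of a complex matrix and then deduces that $CB$ and $BC$ are group invertible from $B(CB)^{\pi}=0$ and $C(BC)^{\pi}=0$, whereas you treat group invertibility of $BC$ and $CB$ as implicit in the hypotheses; in the matrix setting these readings coincide, so nothing is lost.
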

\begin{proof} Let $Q=\left(
  \begin{array}{cc}
    0 & B \\
    C & 0
  \end{array}
\right)$. As a complex matrix, $CB$ has Drazin inverse. Then $(CB)(CB)^D=(CB)^D(CB),(CB)^D=(CB)^D(CB)$ $(CB)^D.$
Since $B(CB)^{\pi}=0$, we have $CB(CB)^{\pi}$ $=0$. Then $CB$ has group inverse.
Likewise, $BC$ has group inverse. One directly checks
that
$Q^{\#}=\left(
  \begin{array}{cc}
  0& B(CB)^{\#} \\
  C(BC)^{\#}&0
  \end{array}
\right).$ Moreover, we verify that
$$\begin{array}{rl}
&Q\left(
  \begin{array}{cc}
  0&I\\
  I&0
  \end{array}
\right)\left(
  \begin{array}{cc}
  B^{\tiny\textcircled{\#}}&0\\
  0&C^{\tiny\textcircled{\#}}
  \end{array}
\right)Q\\
=&\left(
  \begin{array}{cc}
  0&BB^{\tiny\textcircled{\#}}B\\
  CC^{\tiny\textcircled{\#}}C&0
  \end{array}
\right)\\
=&Q;\\
&(Q\left(
  \begin{array}{cc}
  0&I\\
  I&0
  \end{array}
\right)\left(
  \begin{array}{cc}
  B^{\tiny\textcircled{\#}}&0\\
  0&C^{\tiny\textcircled{\#}}
  \end{array}
\right))^*\\
=&\left(
  \begin{array}{cc}
  BB^{\tiny\textcircled{\#}}&0\\
  0&CC^{\tiny\textcircled{\#}}
  \end{array}
\right)^*\\
=&Q\left(
  \begin{array}{cc}
  0&I\\
  I&0
  \end{array}
\right)\left(
  \begin{array}{cc}
  B^{\tiny\textcircled{\#}}&0\\
  0&C^{\tiny\textcircled{\#}}
  \end{array}
\right).
\end{array}$$ This implies that $Q$ has $(1,3)$-inverse. In light of ~\cite[Lemma 2.1]{XS},
$Q$ has core inverse. In this case,
$$\begin{array}{rll}
Q^{\tiny\textcircled{\#}}&=&Q^{\#}QQ^{(1,3)}\\
&=&\left(
  \begin{array}{cc}
  0& B(CB)^{\#} \\
  C(BC)^{\#}&0
  \end{array}
\right)\left(
  \begin{array}{cc}
  BB^{\tiny\textcircled{\#}}&0\\
  0&CC^{\tiny\textcircled{\#}}
  \end{array}
\right)\\
&=&\left(
  \begin{array}{cc}
  0& B(CB)^{\#}CC^{\tiny\textcircled{\#}} \\
  C(BC)^{\#}BB^{\tiny\textcircled{\#}}&0
  \end{array}
\right)\\
&=&\left(
  \begin{array}{cc}
  0& (BC)^{\#}BCC^{\tiny\textcircled{\#}} \\
  (CB)^{\#}CBB^{\tiny\textcircled{\#}}&0
  \end{array}
\right),
\end{array}$$ as asserted.\end{proof}

\begin{thm} Let $A,B,C$ and $D$ have core inverses. If $AB=BD, DC=CA, A^*B=BD^*, D^*C=CA^*, B(CB)^{\pi}=0$ and $C(BC)^{\pi}=0$ and $A^{\tiny\textcircled{\#}}BD^{\tiny\textcircled{\#}}C$ is nilpotent, then $M$ has core inverse.\end{thm}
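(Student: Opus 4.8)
The plan is to realize $M$ as a sum of two core invertible matrices satisfying the commutation hypotheses of Theorem 3.4, and then to verify condition (2) of that theorem by exhibiting an \emph{invertible} element. Set
$$X=\left(\begin{array}{cc} A & 0 \\ 0 & D \end{array}\right), \qquad Y=\left(\begin{array}{cc} 0 & B \\ C & 0 \end{array}\right),$$
so that $M=X+Y$. Since $A$ and $D$ have core inverses, $X$ is core invertible with $X^{\tiny\textcircled{\#}}=\mathrm{diag}(A^{\tiny\textcircled{\#}},D^{\tiny\textcircled{\#}})$, the formula being immediate from the defining equations $X^{\tiny\textcircled{\#}}X^2=X$, $X(X^{\tiny\textcircled{\#}})^2=X^{\tiny\textcircled{\#}}$, $(XX^{\tiny\textcircled{\#}})^*=XX^{\tiny\textcircled{\#}}$. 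Since $B(CB)^{\pi}=0$ and $C(BC)^{\pi}=0$, Lemma 4.1 gives that $Y$ is core invertible. Thus both summands lie in $\mathcal{A}^{\tiny\textcircled{\#}}$ with $\mathcal{A}={\Bbb C}^{2n\times 2n}$.

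First I would check the commutation hypotheses of Theorem 3.4 with $a=X$, $b=Y$. A direct block multiplication gives
$$XY=\left(\begin{array}{cc} 0 & AB \\ DC & 0 \end{array}\right), \qquad YX=\left(\begin{array}{cc} 0 & BD \\ CA & 0 \end{array}\right),$$
so the hypotheses $AB=BD$ and $DC=CA$ yield $XY=YX$; likewise, since $X^*=\mathrm{diag}(A^*,D^*)$, the hypotheses $A^*B=BD^*$ and $D^*C=CA^*$ yield $X^*Y=YX^*$. Hence Theorem 3.4 is applicable, and to conclude $M=X+Y\in\mathcal{A}^{\tiny\textcircled{\#}}$ it suffices to verify condition (2) of that theorem, namely that $1+X^{\tiny\textcircled{\#}}Y\in\mathcal{A}^{\tiny\textcircled{\#}}$ together with the side condition $(1+X^{\tiny\textcircled{\#}}Y)^{\pi}X(1-XX^{\tiny\textcircled{\#}})=0$.

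The heart of the argument is that $1+X^{\tiny\textcircled{\#}}Y$ is in fact \emph{invertible}. Computing,
$$1+X^{\tiny\textcircled{\#}}Y=\left(\begin{array}{cc} I & A^{\tiny\textcircled{\#}}B \\ D^{\tiny\textcircled{\#}}C & I \end{array}\right),$$
and writing $U=A^{\tiny\textcircled{\#}}B$, $V=D^{\tiny\textcircled{\#}}C$, the Schur-type factorization
$$\left(\begin{array}{cc} I & U \\ V & I \end{array}\right)=\left(\begin{array}{cc} I & 0 \\ V & I \end{array}\right)\left(\begin{array}{cc} I & U \\ 0 & I-VU \end{array}\right)$$
shows that $1+X^{\tiny\textcircled{\#}}Y$ is invertible precisely when $I-VU=I-D^{\tiny\textcircled{\#}}CA^{\tiny\textcircled{\#}}B$ is. Now $UV=A^{\tiny\textcircled{\#}}BD^{\tiny\textcircled{\#}}C$ is nilpotent by hypothesis, and since $(VU)^{k+1}=V(UV)^kU$ the matrix $VU$ is nilpotent as well; hence $I-VU$ is invertible and so is $1+X^{\tiny\textcircled{\#}}Y$. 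An invertible element is core invertible and has vanishing spectral idempotent, so $(1+X^{\tiny\textcircled{\#}}Y)^{\pi}=0$ and the side condition holds automatically. Condition (2) of Theorem 3.4 is thus satisfied, whence condition (1) holds and $M\in\mathcal{A}^{\tiny\textcircled{\#}}$.

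I expect the main obstacle to be precisely the passage from nilpotency of $A^{\tiny\textcircled{\#}}BD^{\tiny\textcircled{\#}}C$ to invertibility of $1+X^{\tiny\textcircled{\#}}Y$; the decisive observation is that nilpotency is preserved under cyclic permutation of a product ($UV$ nilpotent $\Leftrightarrow VU$ nilpotent), which is exactly what the block factorization requires. Everything else — the core invertibility of the two summands and the two commutation identities — reduces to routine block computation together with the cited Lemma 4.1 and Theorem 3.4. A point worth flagging is that, because $A$ and $D$ are assumed only core invertible rather than EP, one cannot invoke Corollary 3.5 directly and must instead verify condition (2) of Theorem 3.4 by hand, a step that the invertibility of $1+X^{\tiny\textcircled{\#}}Y$ renders painless.
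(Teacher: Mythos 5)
Your proposal is correct and follows essentially the same route as the paper: decompose $M$ into the diagonal part $P=\mathrm{diag}(A,D)$ and the anti-diagonal part $Q$, verify $PQ=QP$ and $P^*Q=QP^*$ from the stated commutation hypotheses, invoke Lemma 4.1 for the core invertibility of $Q$, and apply Theorem 3.4 after showing $I+P^{\tiny\textcircled{\#}}Q$ is invertible (hence has vanishing spectral idempotent). Your Schur factorization and the cyclic-permutation argument for nilpotency merely make explicit a step the paper asserts without detail, so the two proofs coincide in substance.
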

\begin{proof} Write $M=P+Q$, where $$P=\left(
  \begin{array}{cc}
    A & 0 \\
    0 & D
  \end{array}
\right), Q=\left(
  \begin{array}{cc}
   0 & B \\
   C & 0
  \end{array}
\right).$$ Since $A$ and $D$ have core inverses, so has $P$, and that $$P^{\tiny\textcircled{\#}}=\left(
  \begin{array}{cc}
    A^{\tiny\textcircled{\#}} & 0 \\
    0 & D^{\tiny\textcircled{\#}}
  \end{array}
\right).$$ In view of Lemma 4.1, $Q$ has core inverse. We easily check that
$$PQ=\left(
  \begin{array}{cc}
   0 & 0 \\
    DC & 0
  \end{array}
\right)=\left(
  \begin{array}{cc}
  0 & 0 \\
    CA & 0
  \end{array}
\right)=QP.$$ Likewise, we verify that $P^*Q=QP^*$. Moreover, we check that
$$\begin{array}{rll}
I_{m+n}+P^{\tiny\textcircled{\#}}Q&=&\left(
\begin{array}{cc}
I_m&A^{\tiny\textcircled{\#}}B\\
D^{\tiny\textcircled{\#}}C&I_n
\end{array}
\right).
\end{array}$$ Since $A^{\tiny\textcircled{\#}}BD^{\tiny\textcircled{\#}}C$ is nilpotent, we prove that
$I_{m+n}+P^{\tiny\textcircled{\#}}Q$ is invertible, and so it has core inverse. Additionally, $[I_{m+n}+P^{\tiny\textcircled{\#}}Q]^{\pi}=0$.
According to Theorem 3.4, $M$ has core inverse, as asserted.
\end{proof}

As an immediate consequence we now derive

\begin{cor} Let $A,B,C$ and $D$ have core inverses. If $AB=BD, DC=CA, A^*B=BD^*, D^*C=CA^*, B(CB)^{\pi}=0$ and $C(BC)^{\pi}=0$ and $A^{\tiny\textcircled{\#}}BD^{\tiny\textcircled{\#}}C$ is nilpotent, then $M$ has core inverse.\end{cor}
\begin{proof} Applying Theorem 4.2, we prove that $\left(
\begin{array}{cc}
D&C\\
B&A
\end{array}
\right)$ has group inverse. Clearly,
$$M=\left(
\begin{array}{cc}
0&I\\
I&0
\end{array}
\right)\left(
\begin{array}{cc}
D&C\\
B&A
\end{array}
\right)\left(
\begin{array}{cc}
0&I\\
I&0
\end{array}
\right),$$ and so $M$ has core inverse.\end{proof}

We are now ready to prove the following.

\begin{thm} Let $A,B,C$ and $D$ have core inverses. If $AB=BD, DC=CA, B^*A=DB^*, B(CB)^{\pi}=0$ and $C(BC)^{\pi}=0$ and $A^{\tiny\textcircled{\#}}BD^{\tiny\textcircled{\#}}C$ is nilpotent, then $M$ has core inverse.\end{thm}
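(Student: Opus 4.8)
The plan is to follow the template of Theorem 4.2 and write $M=P+Q$ with
$$P=\left(\begin{array}{cc} A & 0 \\ 0 & D \end{array}\right),\qquad Q=\left(\begin{array}{cc} 0 & B \\ C & 0 \end{array}\right).$$
Since $A$ and $D$ have core inverses, $P$ has core inverse $P^{\tiny\textcircled{\#}}=\left(\begin{array}{cc} A^{\tiny\textcircled{\#}} & 0 \\ 0 & D^{\tiny\textcircled{\#}} \end{array}\right)$, and the hypotheses $B(CB)^{\pi}=0$, $C(BC)^{\pi}=0$ are precisely those of Lemma 4.1, so $Q$ has core inverse. The algebraic commutation $PQ=QP$ is immediate from $AB=BD$ and $DC=CA$, exactly as in Theorem 4.2. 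The strategy is then to feed the commuting pair $(P,Q)$ into Theorem 3.4.

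It remains to supply the two further hypotheses of Theorem 3.4. For the $*$-commutation, a direct computation gives
$$P^{*}Q-QP^{*}=\left(\begin{array}{cc} 0 & A^{*}B-BD^{*} \\ D^{*}C-CA^{*} & 0 \end{array}\right),$$
so $P^{*}Q=QP^{*}$ is equivalent to the two block identities $A^{*}B=BD^{*}$ and $D^{*}C=CA^{*}$; the first is free, since taking adjoints in the hypothesis $B^{*}A=DB^{*}$ yields exactly $A^{*}B=BD^{*}$. For the invertibility input, as in Theorem 4.2 one has
$$I+P^{\tiny\textcircled{\#}}Q=\left(\begin{array}{cc} I & A^{\tiny\textcircled{\#}}B \\ D^{\tiny\textcircled{\#}}C & I \end{array}\right),$$
and since $A^{\tiny\textcircled{\#}}BD^{\tiny\textcircled{\#}}C$ is nilpotent, the Schur complement $I-A^{\tiny\textcircled{\#}}BD^{\tiny\textcircled{\#}}C$ is invertible; hence $I+P^{\tiny\textcircled{\#}}Q$ is invertible, so it is core invertible and $[I+P^{\tiny\textcircled{\#}}Q]^{\pi}=0$, which establishes condition (2) of Theorem 3.4. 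Granting the $*$-commutation, Theorem 3.4 then yields that $M=P+Q$ is core invertible.

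The hard part is the remaining block identity $D^{*}C=CA^{*}$: it is the only ingredient of Theorem 3.4 not already delivered by $B^{*}A=DB^{*}$. The difficulty is structural, since $D^{*}C=CA^{*}$ involves $C$, whereas the involution hypothesis $B^{*}A=DB^{*}$ involves only $A,D,B$, and the sole relation touching $C$ is $DC=CA$, whose adjoint $C^{*}D^{*}=A^{*}C^{*}$ is an adjoint-twisted version of what is needed rather than the identity itself. I would first try to extract $D^{*}C=CA^{*}$ from $DC=CA$ together with the range conditions $C(BC)^{\pi}=0$, $B(CB)^{\pi}=0$ and the core relations for $A,D$. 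Failing that, the safe route for complex matrices is to bypass the two-sided $*$-commutation entirely and argue through \cite[Theorem 2.6]{XCZ}: a complex matrix has a core inverse if and only if it is group invertible (a $(1,3)$-inverse always exists, namely the Moore--Penrose inverse), so it suffices to prove $M$ group invertible from the commuting decomposition $M=P+Q$ and the nilpotency of $A^{\tiny\textcircled{\#}}BD^{\tiny\textcircled{\#}}C$, an argument that uses no involution hypothesis and in which $B^{*}A=DB^{*}$ plays no role.
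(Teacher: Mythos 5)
Your decomposition $M=P+Q$, the appeal to Lemma 4.1 for $Q$, and the reduction to Theorem 3.4 are exactly the paper's strategy. The only formal difference is the order in which the pair is fed into Theorem 3.4: the paper takes $(a,b)=(Q,P)$, so it checks $Q^*P=PQ^*$ and inverts $I+Q^{\tiny\textcircled{\#}}P$, whereas you take $(a,b)=(P,Q)$ and invert $I+P^{\tiny\textcircled{\#}}Q$; your Schur-complement argument for the latter is the more transparent of the two. Since $P^*Q=QP^*$ and $Q^*P=PQ^*$ are adjoints of one another, the $*$-commutation obstruction is identical in either order: one needs both $A^*B=BD^*$, which as you note is the adjoint of the hypothesis $B^*A=DB^*$, and $D^*C=CA^*$ (equivalently $C^*D=AC^*$), which is not among the hypotheses --- the adjoint of $DC=CA$ yields only $C^*D^*=A^*C^*$. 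You have located a genuine defect rather than missed a trick: at the corresponding step the paper's own proof simply writes $C^*D=AC^*$ with no justification, so the published argument has the same hole, and it appears the hypothesis $D^*C=CA^*$ of Theorem 4.2 was dropped without actually being shown redundant.

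The problem is that your proposal diagnoses this gap without repairing it, so it is not yet a proof. You do not derive $D^*C=CA^*$ (and there is no visible route: already when $B$ and $C$ are invertible the identity reduces to $C^*C$ commuting with $A$, which the remaining hypotheses do not force), and the fallback through group invertibility is only announced. That fallback is sound in principle for complex matrices, since the Moore--Penrose inverse always supplies a $(1,3)$-inverse and core invertibility then reduces to group invertibility; but you would still have to prove $\mathrm{rank}(M)=\mathrm{rank}(M^2)$, and none of the tools quoted in the paper does this for a commuting sum without the involution hypotheses: Theorem 3.4 is itself the statement you would be trying to replace, and the triangular results of \cite{XS} used inside it require a Pierce decomposition you have not produced. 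Note also that $P^{\tiny\textcircled{\#}}\neq P^{\#}$ in general, so invertibility of $I+P^{\tiny\textcircled{\#}}Q$ does not slot directly into a pure group-inverse perturbation argument. As written, the honest conclusions are either to add $D^*C=CA^*$ (equivalently $C^*D=AC^*$) to the hypotheses and finish exactly as in Theorem 4.2, or to carry out the group-invertibility computation in full; neither is done here.
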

\begin{proof} Write $M=P+Q$, where $$P=\left(
  \begin{array}{cc}
    A & 0 \\
    0 & D
  \end{array}
\right), Q=\left(
  \begin{array}{cc}
   0 & B \\
   C & 0
  \end{array}
\right).$$ Then we check that
$$\begin{array}{rll}
Q^*P&=&\left(
  \begin{array}{cc}
   0 & C^* \\
  B^* & 0
  \end{array}
\right)\left(
  \begin{array}{cc}
    A & 0 \\
    0 & D
  \end{array}
\right)\\
&=&\left(
  \begin{array}{cc}
    0 & C^*D \\
    B^*A & 0
  \end{array}
\right)\\
&=&\left(
  \begin{array}{cc}
    0 & AC^* \\
   DB^* & 0
  \end{array}
\right)\\
&=&\left(
  \begin{array}{cc}
    A & 0 \\
    0 & D
  \end{array}
\right)\left(
  \begin{array}{cc}
   0 & C^* \\
  B^* & 0
  \end{array}
\right)\\
&=&PQ^*.
\end{array}$$ Similarly, $QP=PQ$.
Further, we verify that
$$\begin{array}{rll}
I+Q^{\tiny\textcircled{\#}}P&=&I+\left(
  \begin{array}{cc}
  0& (BC)^{\#}BCC^{\tiny\textcircled{\#}} \\
  (CB)^{\#}CBB^{\tiny\textcircled{\#}}&0
  \end{array}
\right)\left(
  \begin{array}{cc}
    A & 0 \\
    0 & D
  \end{array}
\right)\\
&=&\left(
\begin{array}{cc}
I&A^{\tiny\textcircled{\#}}(BC)^{\#}BCC^{\tiny\textcircled{\#}}D\\
(CB)^{\#}CBB^{\tiny\textcircled{\#}}A&I
\end{array}
\right).
\end{array}$$ Since $A^{\tiny\textcircled{\#}}BD^{\tiny\textcircled{\#}}C$ is nilpotent, we prove that
$I+Q^{\tiny\textcircled{\#}}P$ is invertible; hence, it has core inverse.  Additionally, $[I+Q^{\tiny\textcircled{\#}}P]^{\pi}=0$.
In light of Theorem 3.4, $M$ has core inverse, as requred.\end{proof}

\begin{cor} Let $A,B,C$ and $D$ have core inverses. If $AB=BD, DC=CA, A^*B=BD^*, D^*C=CA^*, B(CB)^{\pi}=0$ and $C(BC)^{\pi}=0$ and $A^{\tiny\textcircled{\#}}BD^{\tiny\textcircled{\#}}C$ is nilpotent, then $M$ has core inverse.\end{cor}
\begin{proof} As in the proof of Corollary 4.3, we obtain the result by applying Theorem 4.4 to the block matrix
$\left(
\begin{array}{cc}
D&C\\
B&A
\end{array}
\right)$.\end{proof}

\vskip10mm

\end{document}